\newcommand{\CC}{\mathbb C}
\newcommand{\RR}{\mathbb R}
\newcommand{\maD}{\mathcal{D}}
\newcommand{\ZZ}{\mathbb{Z}}
\newcommand{\Sch}{\mathscr{S}}
\newcommand{\NN}{\mathbb{N}}
\newcommand{\maE}{\mathscr{E}}
\newcommand{\maG}{\mathcal{G}}
\newcommand{\smooth}{\mathcal{C}^{\infty}}
\newcommand{\ep}{\varepsilon}
\newcommand{\ip}[1]{\langle #1 \rangle}
\theoremstyle{definition}
\newtheorem{definition}{Definition}[section]
\theoremstyle{plain}
\newtheorem{theo}[definition]{Theorem}
\newtheorem{prop}[definition]{Proposition}
\newtheorem{lem}[definition]{Lemma}
\theoremstyle{remark}
\newtheorem{remark}[definition]{Remark}
\newtheorem{eg}[definition]{Example}
\newcommand{\R}{\mathbb R}
\newcommand{\N}{\mathbb N}
\newcommand{\Dom}{\mathrm{Dom}}
\newcommand{\eps}{\varepsilon}
\newcommand{\supp}{\mbox{supp}}
\newcommand{\D}{\mathcal{D}}
\newcommand{\cinfty}{{\mathcal C}^\infty}
\newcommand{\vphi}{\varphi}
\newcommand{\comp}{\subset\subset}
\newcommand{\gs}{{\mathcal G}}
\newcommand{\ns}{{\mathcal N}}
\newcommand{\esm}{{\mathcal E}_M}
\newcommand{\maS}{\mathcal{S}}
\newcommand{\hscomp}{H^s_{\mathrm{cp}}}
\newcommand{\hsncomp}{H^{s_0}_{\mathrm{cp}}}
\newcommand{\teps}{T_\eps(\sqrt{\Delta})}
\newcommand{\lt}[1]{\|#1\|_{L^2(M)}}
\newcommand{\WF}{\mathrm{WF}}
\newcommand{\WFg}{\mathrm{WF}_g}
\begin{document}
\title[Optimal regularization processes]{Optimal regularization processes on complete Riemannian manifolds}

\author[S. Dave]{Shantanu Dave}
\address{University of Vienna, Austria }
\email{shantanu.dave@unvie.ac.at}

\author[G. H\"ormann]{G\"unther H\"ormann}
\address{University of Vienna, Austria }
\email{guenther.hoermann@unvie.ac.at}

\author[M. Kunzinger]{Michael Kunzinger}
\address{University of Vienna, Austria }
\email{michael.kunzinger@unvie.ac.at}
\thanks{Supported by FWF grants P 20525-N13 and Y237-N13 of the Austrian Science Fund.}

\begin{abstract}
We study regularizations of Schwartz distributions on a complete Riemannian manifold $M$.
These approximations are based on families of smoothing operators obtained from 
the solution operator to the wave equation on $M$ derived from the metric Laplacian.
The resulting global regularization processes are optimal in the sense that they preserve
the microlocal structure of distributions, commute with isometries and provide
sheaf embeddings into algebras of generalized functions on $M$.\medskip\\

\noindent {\em 2010 Mathematics Subject Classification:} 58J37, 46F30, 46T30, 35A27, 35L05
{\em Keywords:} regularization of distributions, wave equation, microlocal regularity
\end{abstract}

\maketitle

\section{Introduction}

In this paper we introduce a  method of regularizing distributions on a smooth manifold by nets of  
smooth functions such that the approximating nets themselves  retain a maximal amount of information about the distribution. 
In particular, the analytical properties of interest include  the support of the distribution, its microlocal singularities 
(wavefront set), its Sobolev regularity  and its behavior (pull-back)  under certain diffeomorphisms. 
We shall first  abstractly describe what properties such an approximation  should have and then construct 
such approximations by  a suitable  choice of smoothing process. The latter is obtained using functional calculus
for the solution operator of the wave equation for the metric Laplacian (we note that the set of analytical properties
to be preserved excludes, e.g., smoothing via the heat kernel as a possible approximation procedure). 

We will give a precise formulation of the requirements to be imposed on our smoothing processes. 
In order to achieve this we need a conceptual framework that allows to assign 
geometrical and analytical properties like those mentioned above to regularizations, that is, to nets of smooth functions.
Such a framework is in fact available in the theory of 
algebras of generalized functions (\cite{C1,C2,O,NPS,GKOS}), which therefore will provide the 
underlying language for our approach. The basic idea in this theory is to express analytical
properties of distributions as asymptotic estimates in terms of a regularization parameter $\eps$.
Up to now, there is a certain dichotomy in the theory of algebras of generalized functions. On the one hand,
so-called full Colombeau algebras allow a canonical embedding of the space of Schwartz distributions
on differentiable manifolds (\cite{GFKS,GKSV}), but their elements do not depend on a single
real regularization parameter $\eps$. Instead, such generalized functions are smooth
maps on certain spaces of test functions and require a rather involved asymptotic. So-called
special Colombeau algebras, on the other hand, are modelled directly as quotients of certain
powers of the space of smooth functions, hence allow for a more straightforward modelling
of singularities. A rich geometric (e.g., \cite{DD,KS,conn,KSV,V}) and analytic (e.g., 
\cite{O,NPS,DPS,GH,HdH,HOP,G}) theory is available for 
such algebras. The drawback here is that there is no canonical
embedding of distributions into such algebras. Up to recently, only `non-geometric'
embeddings, based e.g.\ on de Rham regularizations (basically through convolution
with a mollifier in charts) were available, cf.\ \cite{DD,GKOS}. In \cite{D}, however, 
a new approach to embedding
distributions into special Colombeau algebras was put forward, namely a geometric
embedding of distributions on compact manifolds without boundary based on 
functional calculus of the Laplacian. In the present paper we follow this general
philosophy to produce geometrical embeddings for general complete Riemannian 
manifolds. A main new ingredient here is that we employ the solution operator 
for a certain initial value problem of the wave equation for our regularization
processes. We obtain a set of optimal properties for such embeddings. In particular,
they commute with isometries, respect the functional calculus of the Laplacian, 
and preserve the microlocal structure of distributions.

The paper is organized as follows: in the remainder of this introduction we fix
some notations and terminology. Section \ref{wavesec} collects a number of
results on wave equations on complete Riemannian manifolds. These preparations
are then used in section \ref{embsec} to construct optimal regularization
processes and use these to obtain geometrical embeddings of Schwartz distributions
into special Colombeau algebras. Finally, section \ref{vectorsec} shows how to extend
our approach in various directions. On the one hand, we demonstrate how to
adapt the construction to obtain embeddings for distributional sections of 
vector bundles. On the other hand, we specify the main properties of the Laplacian
that were used to obtain optimal regularization processes in section \ref{embsec} and
show that a wide class of differential operators allows to obtain analogous
regularization processes.

Throughout this paper $M$ will denote an orientable complete Riemannian manifold of dimension
$n$ with Riemannian metric $g$. The space $\D'(M)$ of Schwartz distributions on $M$
is defined as the dual of the space $\Omega_c^n(M)$ of compactly supported $n$-forms
on $M$. We write $\D(M)$ for the space of smooth compactly supported functions
on $M$. Since $M$ is orientable and Riemannian, we may identify $\D(M)$ with
$\Omega_c^n(M)$ via $f\mapsto f\cdot dg$, with $dg$ 
the Riemannian volume form induced by $g$. In this sense, $\D'(M)$ is in
fact the dual space of $\D(M)$. We consider $L^1_{\mathrm{loc}}(M)$ (hence in particular 
$\cinfty(M)$) a subspace of $\D'(M)$ via $f\mapsto [\vphi \mapsto \int_M f\vphi dg]$.
If $E$ is a vector bundle over $M$ then $\D'(M:E)$, the space
of $E$-valued distributions on $M$ is given by $\D'(M:E) = \D'(M)\otimes_{\cinfty(M)} 
\Gamma^\infty(M:E)$, with $\Gamma^\infty(M:E)$ the space of smooth sections
of $E$ (cf., e.g., \cite{GKOS}, 3.1 for details). The wavefront set of a distribution
$w\in \D'(M)$ is denoted by $\WF(w)$.

We now turn to notations from the theory of algebras of generalized functions,
where we basically adopt the terminology from \cite{GKOS, Gtop}.
Given $E$ a locally convex (Hausdorff) topological vector space, one can associate to $E$
a space $\maG_E$ of generalized functions as follows.
Let $I$ be the interval $(0,1]$. Define the smooth moderate nets in $E$ to be smooth maps 
(in the sense of \cite{KM})
$$
I\rightarrow E\quad\ep\mapsto u_{\ep}
$$ 
such that for all continuous semi-norms $\rho$ on $E$ there exists  a (negative) integer $N$
such that
\begin{eqnarray}\label{asympt}
|\rho(u_{\epsilon})| =  O(\epsilon^N)  
\quad \textrm{ as }\, \,\epsilon \rightarrow
 0.
\end{eqnarray}
Here as usual by $f(\ep) = O(g(\ep))$ as $\ep\rightarrow 0$ we mean there exists an  $\ep_0>0$ and a constant $C>0$ such that  $f(\ep)<Cg(\ep)$  for $\ep<\ep_0$.
We denote the set of all  moderate smooth nets in $E$ by  ${\mathcal M}_E$.
 Similarly we can define the negligible nets to be the smooth maps $u_{\ep}$ such that \eqref{asympt} holds
for all continuous seminorms $\rho$ on $E$ and all $N$. We  shall denote the set  of all smooth negligible nets by 
${\mathcal N}_E$.

 The space of generalized functions based on $E$ is then defined to be the quotient,
\[\maG_E:={\mathcal M}_E/{\mathcal N}_E.\]
If $E$ is a locally convex algebra then $\gs_E$ is an algebra as well.
One notes that in defining ${\mathcal M}_E$ and ${\mathcal N}_E$  it suffices to restrict to any family of
seminorms that generate the locally convex topology on $E$. If  $(u_{\ep})$ is a moderate net in ${\mathcal M}_E$ 
then  the element which it represents in the  quotient $\maG_E$  will be written as  $[(u_{\ep})]$.

When $E=\smooth(M)$ is the algebra of smooth functions on a manifold $M$ then we
write ${\mathcal M}_{\cinfty(M)}=\esm(M)$, $\ns_{\cinfty(M)}=\ns(M)$, and 
$\maG(M):=\maG_{\smooth(M)}$. $\gs(M)$ is the standard (special) Colombeau algebra
of generalized functions on $M$ (\cite{C1,DD,GKOS}).
For $E=\CC$ the space $\maG_{\CC}$ inherits a  ring structure from $\CC$ and we call it  the space of generalized numbers and 
denote it by $\tilde{\CC}$. Every space $\maG_E$ is naturally a $\tilde{\CC}$-module, and hence  is often referred to as 
the $\tilde{\CC}$-module associated with $E$ (\cite{Gtop}).  

We recall the functoriality of the above construction.
If $\phi:E\rightarrow F$ is a continuous linear map between locally
convex spaces $E $ and $F$ then there is a natural induced map $\phi_*:\maG_E\rightarrow
\maG_F$ defined on the representatives as
$\phi_*([(u_{\epsilon})])=[(\phi(u_\epsilon))]$. 
For example any smooth map between two manifolds $f:M\rightarrow N$ gives  rise
to a pullback map $f^*:\maG(N)\rightarrow \maG(M)$. As  a consequence we can define a presheaf of algebras on $M$ by  assigning to any open set 
$U\subseteq M$ the space $\maG(U)$. The restriction maps are given by the pull back under inclusions, that is if
$i:U\hookrightarrow V$ is an inclusion of open sets then $i^*:\maG(V)\rightarrow \maG(U)$ is the restriction map. This presheaf is  in fact a fine sheaf. 
Thus in particular we can define the support of a global section $u\in\maG(M)$  as usual to be the complement of the  
biggest open subset of $M$  on which   $u$ restricts to $0$.  In a similar fashion if $E\rightarrow  M$ is a  (complex) 
vector bundle then we obtain a sheaf of $\tilde{\CC}$-modules  defined as 
$\maG(M:E):=\maG_{\Gamma^{\infty}(M:E)} \cong \gs(M)\otimes_{\cinfty(M)} \Gamma^\infty(M:E)$.

For any locally convex space $E$ we can also define a subspace $\maG^{\infty}_E$ of regular elements of $\maG_E$. 
These are all elements in $\maG_E$  such that there exists an integer $N$ so that  \eqref{asympt} holds 
independently of the   
seminorm $\rho$ chosen. Again we shall denote  by $\maG^{\infty}(M)$ the algebra $\maG^{\infty}_{\smooth(M)}$.  
The algebra $\maG^{\infty}(M)$ provides the regularity features for the analysis of generalized functions in 
$\maG(M)$ in the same way that $\smooth(M)$ provides these features in $\maD'(M)$ (\cite{O,H,DPS,GH}). For instance:
\begin{enumerate}[(a)]
\item  {\it Singular support}: For $u\in\maG(M)$ the singular support is defined as the  complement of the largest open  
set $U$ on which  the restriction  $u|_U$ is in $\maG^{\infty}(U)$.
\item {\it Wavefront set}: Let $\Omega\subseteq \R^n$ be open. A generalized function $u\in \gs(\Omega)$ is
called $\gs^\infty$-microlocally regular at $(x_0,\xi_0)\in T^*\Omega\setminus 0$ if there exists some $\varphi
\in \D(\Omega)$ with $\varphi(x_0)=1$ and a conic neighborhood $\Gamma\subseteq \R^n\setminus 0$ of $\xi_0$ such that
the Fourier transform ${\mathcal F}(\varphi u)$ is rapidly decreasing in $\Gamma$, i.e., there exists $N$ such 
that for all $l$,
\begin{equation}\label{wfest}
 \sup_{\xi\in \Gamma} (1+|\xi|)^l|(\varphi u_\eps)^\wedge(\xi)| = O(\eps^{-N}) \qquad (\eps\to 0).
\end{equation}
The generalized wave front set of $u$, $\WFg(u)$, is the complement of the set of points $(x_0,\xi_0)$
where $u$ is $\gs^\infty$-microlocally regular. It follows from \cite{Ha} that, based on this
definition, for any $u\in \maG(M)$, $\WFg(u)$ can naturally be viewed as a subset of $T^*M\setminus0$. 

An alternative description of $\WFg(u)$ is as follows (\cite{GH}): Let $P$ be an order  $0$ classical pseudodifferential operator  
and let $\operatorname{char}(P)\subseteq T^*M$ be the characteristic set of $P$, that is the $0$-set in $T^*M\setminus 0$ 
of its principal symbol. Then for $u\in \gs(M)$,
\[\WFg(u)=\bigcap_{Pu\in\maG^{\infty}(M)} \operatorname{char}(P)\quad P\in\Psi^0_{cl}(M).\]
\item {\it Hypoellipticity}: An operator $P$ is said to be $\maG^{\infty}$-hypoelliptic if
for every $U\subseteq M$ open and every $u\in \gs(U)$,
\[Pu\in\maG^{\infty}(U)\Longrightarrow u\in\maG^{\infty}(U).\]
\end{enumerate}
General references for microlocal analysis in algebras of generalized functions are \cite{NPS,DPS,H,HdH,G,GH}. 

\section{The wave equation on a complete Riemannian manifold}\label{wavesec}
In our approach, optimal regularization processes on complete Riemannian manifolds will be based on the solution operator
for the wave equation. To allow for a smooth presentation, in the present section we therefore collect some basic properties of 
solutions of the wave equation in this global setting.
 
Let $(M,g)$  be an oriented, connected complete Riemannian manifold (without  boundary) of dimension $n$ and denote by $\Delta$ the Laplace operator on $M$.
The Riemannian metric $g$ induces a volume form $dg$ on $M$, and
we will denote the corresponding $L^2$-norm by $\|\,\|_{L^2(M)}$.
On differential forms, the corresponding inner product is given by $(\alpha,\beta) := \int \alpha * \beta \equiv \int \alpha\wedge *\beta$.

 Let $d$ be the exterior differential on the space $\Omega^*(M)$ of differential
forms on $M$ and denote by $*$ the Hodge star operator. Then the codifferential $\delta$ on $\Omega^*(M)$
is defined, for any $k$-form $\alpha$, by $\delta \alpha = (-1)^{nk+n+1} * d *\alpha$. Finally, the
Laplace operator on $\Omega^*(M)$ is defined by
$\Delta:= (d+\delta)^2=d\circ \delta + \delta\circ d$.
This sign convention renders $\Delta$ a positive operator  on $L^2(M)$ (cf.\ \cite{gaffney}),
and  for any smooth function $u$   in particular $\Delta u = - \mathrm{div} \mathrm{grad} u$.

The operators $d$, $\delta$ and $\Delta$ are unbounded on the Hilbert space $L^2(M:\Lambda^*M)$.
The natural domain of $d$ is given by
$\Dom(d):=\{\alpha\in \Omega^*(M)\mid \|\alpha\|\,, \|d\alpha\| < \infty\}$, and analogously for $\delta$. 
This fixes the natural domain of $\Delta$ to be 
$$
\Dom(\Delta):=\{\alpha\in \Dom(d)\mid d\alpha \in \Dom(\delta)\} \cap \{\alpha\in \Dom(\delta)\mid \delta\alpha \in \Dom(d)\}. 
$$
We will mainly be interested in the restriction of $\Delta$ to $L^2(M,dg)$, which is an 
unbounded essentially self-adjoint operator with dense domain (cf.\ \cite{gaffney}).

We consider the following initial value problem for the wave equation on $M$ (or, strictly speaking, on $\R \times M$):
\begin{eqnarray}\label{wave_eqn}
(\frac{\partial^2}{\partial s^2}+\Delta)u=0 \label{wave}\\
u(0,x)=u_0(x) \label{waveic1}\quad
\frac{\partial}{\partial s}u(0,x)=0 \label{waveic2}
\end{eqnarray}
Since $g$ is complete, the Laplace operator is self-adjoint and the above wave equation 
has a  unique (mild, hence distributional) solution in ${\mathcal C}(\R,L^2(M))$ 
for all $u_0$ in  $L^2(M)$. By functional calculus this solution 
can be written as  $\operatorname{cos}(s\sqrt{\Delta})u_0$.
\begin{remark} We briefly sketch a  proof to the existence and uniqueness result for the above wave equation. 
Since  $\Delta$ is a positive self-adjoint operator, we may equivalently rewrite (\ref{wave})--(\ref{waveic2}) as a first order
initial value problem on the Hilbert space $H:= L^2(M)\oplus L^2(M)$:
\begin{eqnarray*}
\frac{d}{ds}
\begin{pmatrix}
u \\
v
\end{pmatrix}
&=&
\begin{pmatrix}
-i\sqrt{\Delta}  & I \\
0	& i\sqrt{\Delta}
\end{pmatrix}
\!\!
\begin{pmatrix}
u \\
v
\end{pmatrix}
\\
u(0,\,.\,) = u_0 && v(0,\,.\,) = v_0 := i\sqrt{\Delta}u_0
\end{eqnarray*}
We set
$$
A_0:= \begin{pmatrix}
0  & I \\
0	& 0
\end{pmatrix}
\quad A_1:= \begin{pmatrix}
-\sqrt{\Delta}  & 0 \\
0	& \sqrt{\Delta}
\end{pmatrix}
\quad
w_0 := 
\begin{pmatrix}
u_0\\
v_0
\end{pmatrix}
$$
By the theory of unitary semi groups (e.g., \cite{pazy}), $iA_1$ generates a strongly continuous unitary
group $U(s) = \exp(isA_1)$. Since $A_0$ is bounded, $A:=A_0+iA_1$ also generates a strongly continuous
semigroup. Consequently, the above initial value problem with $w_0\in \Dom(A)$ is uniquely solvable.
More explicitly, for $w_0$ in the dense subspace $D^\infty := \bigcap_{k=0}^\infty \Dom(A^k)$, the power series
expansion of $\exp(sA)w_0$ readily shows that $u(s) = \cos(s\sqrt{\Delta})u_0$ on a dense subspace,
hence in fact for all $u_0 \in L^2(M)$ (cf.\ also \cite{CGT, taylor}).
\end{remark}
For a given even Schwartz function $F\in\Sch(\RR)$ the operator $F(\sqrt{\Delta})$  can be defined by 
functional calculus for essentially self adjoint  unbounded operators. 
We show that we have the following alternative description of $F(\sqrt{\Delta})$ by inverse Fourier transform:
\begin{equation}\label{fourier}
F(\sqrt{\Delta})=\frac{1}{2\pi}\int_{-\infty}^{\infty} \hat{F}(s)\cos(s\sqrt{\Delta})\,ds.
\end{equation}
In fact, by standard estimates in the functional calculus this identity holds pointwise
on $L^2(M)$. Moreover, since
$$
\| \hat F(s)\cos(s\sqrt{\Delta})\| \le |\hat F(s)| \|\cos(s\,.\,)\|_{L^\infty} \le |\hat F(s)|
$$
and $\hat F \in L^1(\R)$, the integral in (\ref{fourier}) exists as a Bochner integral in $\mathcal{B}(L^2(M))$.

The description of functional calculus using (\ref{fourier})  allows to estimate the support of 
the kernel of the operator $F(\sqrt{\Delta})$  based on the finite speed of propagation for the 
operator $\cos(s\sqrt{\Delta})$. To see this, we first describe an alternative approach
to obtaining the solution operator to (\ref{wave})--(\ref{waveic2}). Consider the first order differential
operator $D:= d + \delta$ on the space $\Omega^*(M)$ of differential forms on $M$. 
The symbol $\sigma_D$ of $D$ is given by $\sigma_D(x,\xi) = \xi\wedge \,.\, -  i_\xi$, 
where $i_\xi$ denotes interior differentiation
along the vector field metrically equivalent to the one-form $\xi$ (cf., e.g., \cite{N}, 10.1.22).
Therefore, 
$$
\sigma_D(x,\xi)^2 = (\xi\wedge \,.\, -  i_\xi)^2 = -\|\xi\|^2 \,\mathrm{id}
$$
(again by \cite{N}, 10.1.22).
We conclude that the speed of propagation of $D$, defined by 
$c_D:= \sup \{\|\sigma_D(x,\xi)\|\mid x\in M, \xi\in T_xM^*, \|\xi\|=1\}$ is $c_D=1$.
Since $D$ is symmetric and of finite propagation speed, it is essentially self-adjoint (\cite{HR}, 10.2.11). 

Based on these facts, an explicit bound on the speed of propagation for the support of $e^{isD}u$ is given 
by the following result (see \cite{HR}, 10.5.4):

\begin{prop} \label{propsupp}
Let $u\in L^2(M:\Lambda^*M)$ and denote by $d_g$ the distance function induced by $g$. 
Then $\supp(e^{isD}u) \subseteq B_{|s|}(\supp(u)):=\{x\in M \mid d_g(x,\supp(u)) \le |s|\}$.
\end{prop}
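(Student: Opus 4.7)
The plan is to prove Proposition \ref{propsupp} by a standard energy (light-cone) estimate built on three ingredients: the essential self-adjointness of $D$ (established above), the pointwise commutator identity $[D, \phi] = \sigma_D(\,\cdot\,, d\phi)$ for smooth $\phi$ acting as multiplication, and the exact symbolic norm $\|\sigma_D(x, \xi)\|_{\mathrm{op}} = \|\xi\|_g$, which follows from $\sigma_D(x,\xi)^2 = -\|\xi\|^2\,\mathrm{id}$ together with the antisymmetry $\sigma_D(x,\xi)^* = -\sigma_D(x,\xi)$. Fix $s_0 > 0$; the case $s_0 < 0$ is symmetric since $e^{-isD}$ is also unitary. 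Set $K := \supp(u)$ and $u(s) := e^{isD}u$. By strong continuity and unitarity of $e^{isD}$ on $L^2(M:\Lambda^*M)$, I may reduce, by density, to initial data in the smooth-vector subspace $D^\infty := \bigcap_k \Dom(D^k)$, so that $u \in C^1(\R, L^2)$ with $u'(s) = iDu(s)$ and all subsequent manipulations are legitimate.

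Next, for each $\eps > 0$ I would construct a smoothly $s$-dependent family of nonnegative cutoffs $\phi_s \in C^\infty(M)$ with $\phi_0$ vanishing on an $\eps$-neighborhood of $K$, $\phi_{s_0}$ equal to $1$ off the $(s_0 + 2\eps)$-neighborhood of $K$, and the crucial pointwise \emph{light-cone inequality}
\[
 |d\phi_s(x)|_g + \partial_s \phi_s(x) \le 0 \qquad \text{on } M \times [0, s_0].
\]
The natural ansatz is $\phi_s(x) := \chi(\tilde d(x) - s - \eps)$, where $\chi : \R \to [0,1]$ is smooth, nondecreasing, vanishes on $(-\infty,0]$, and is identically $1$ on $[\eps,\infty)$, and $\tilde d \in C^\infty(M)$ is a smooth $1$-Lipschitz approximant of $d_g(\,\cdot\,, K)$ with $\|\tilde d - d_g(\,\cdot\,,K)\|_\infty < \eps/2$. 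The light-cone inequality is then immediate from the chain rule together with $|d\tilde d|_g \le 1$.

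With this cutoff in hand, I would study the moving energy $N(s) := \|\phi_s u(s)\|^2$. Using $2\,\mathrm{Re}\,\bigip{iDw, w} = 0$ for $w \in \Dom(D)$, one finds after a short commutator computation that
\[
 \tfrac{d}{ds}N(s) = 2\!\int_M \phi_s\,\partial_s\phi_s\,\|u(s)\|^2\,dg \;-\; 2\,\mathrm{Re}\,\bigip{i\,\sigma_D(\,\cdot\,,d\phi_s)\,u(s),\,\phi_s u(s)}.
\]
By the antisymmetry of $\sigma_D(x,\xi)$ the pointwise quantity $i\,\langle \sigma_D(x, d\phi_s)u(s,x), u(s,x)\rangle$ is real and bounded in modulus by $|d\phi_s(x)|_g\,\|u(s,x)\|^2$; combined with the light-cone inequality this forces $\frac{d}{ds}N(s) \le 0$. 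Since $N(0) = 0$ by construction, $N \equiv 0$ on $[0,s_0]$, so $u(s_0)$ vanishes on $\{\phi_{s_0} = 1\} \supseteq M \setminus B_{s_0 + 2\eps}(K)$; letting $\eps \downarrow 0$ and using that $B_r(K)$ is closed yields the support bound.

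The main technical obstacle I anticipate is the Lipschitz-but-not-smooth character of $d_g(\,\cdot\,, K)$, which forces the construction of a smooth $1$-Lipschitz approximant $\tilde d$; such approximants are available on any complete Riemannian manifold via Greene-Wu smoothing, but this is where most of the care is needed. A secondary issue is the justification of termwise differentiation of $N(s)$, which is handled by the initial reduction to the smooth-vector space $D^\infty$.
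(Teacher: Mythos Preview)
The paper does not actually prove this proposition: it merely quotes it from Higson--Roe, \emph{Analytic $K$-homology}, Proposition~10.5.4. Your proposal, by contrast, supplies the full argument---the classical energy/light-cone estimate via the commutator identity $[D,\phi]=\sigma_D(\,\cdot\,,d\phi)$ and a time-dependent cutoff---which is precisely the method used in Higson--Roe and related references. In that sense you have gone beyond what the paper itself does.

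Your argument is essentially correct. Two small points are worth flagging. First, the constants in your cutoff are slightly mismatched: with $\phi_s(x)=\chi(\tilde d(x)-s-\eps)$ and $\|\tilde d-d_g(\,\cdot\,,K)\|_\infty<\eps/2$, the set $\{\phi_{s_0}=1\}$ is only guaranteed to contain $M\setminus B_{s_0+5\eps/2}(K)$, not $M\setminus B_{s_0+2\eps}(K)$; this is harmless since you send $\eps\to 0$ anyway. Second, Greene--Wu smoothing produces approximants with $|d\tilde d|_g\le 1+\delta$ for arbitrary $\delta>0$, not exactly $\le 1$; this yields propagation speed $1+\delta$, and a further limit $\delta\to 0$ closes the argument. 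Neither issue affects the validity of the overall strategy.
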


Thus the same property holds for the bounded operator 
$$
\cos(sD) = \frac{1}{2}(e^{isD} + e^{-isD})
$$
By functional calculus, for any $u\in L^2(M)$, $\cos(sD)u$ solves the initial value problem
(\ref{wave_eqn})--(\ref{waveic2}). By uniqueness, therefore, $\cos(sD) = \cos(s\sqrt{\Delta})$
on $L^2(M)$ and the above considerations apply to our solution operator.

Next we provide some estimates that will repeatedly be useful in our further
study of operators of the form $F(\sqrt{\Delta})$.
\begin{lem}\label{norm_estimates}
Let $F\in {\mathcal S}(\R)$ be even. Then for any compactly supported smooth function $u$,  
\[\|F(\sqrt{\Delta})u\|_{L^2(M)}\leq\|u\|_{L^2(M)}\frac{1}{\pi}\int_0^{\infty}|\hat{F}(s)|\,ds.\]
Moreover, for any positive integers $k,l$,
\[\|\Delta^kF(\sqrt{\Delta})\Delta^lu\|_{L^2(M)}\leq\|u\|_{L^2(M)}\frac{1}{\pi}
\int_0^{\infty}|{\hat F}^{(2k+2l)}(s)|\,ds.\]
\end{lem}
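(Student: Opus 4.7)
\medskip
\noindent\textbf{Proof plan.}
The plan is to derive both estimates directly from the Fourier representation (\ref{fourier}) together with the fact that the solution operator $\cos(s\sqrt{\Delta})$ is a contraction on $L^2(M)$ (this follows from functional calculus and self-adjointness of $\sqrt{\Delta}$, since $|\cos(t)|\le 1$ for $t\in\R$).

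For the first inequality, I would apply the Bochner integral form of (\ref{fourier}) to $u$ and pull the $L^2$-norm under the integral:
\[
\|F(\sqrt{\Delta})u\|_{L^2(M)} \le \frac{1}{2\pi}\int_{-\infty}^\infty |\hat F(s)|\,\|\cos(s\sqrt{\Delta})u\|_{L^2(M)}\,ds \le \frac{\|u\|_{L^2(M)}}{2\pi}\int_{-\infty}^\infty |\hat F(s)|\,ds.
\]
Since $F$ is even, $\hat F$ is even as well, so the integral over $\R$ equals twice the integral over $[0,\infty)$, yielding the stated bound with constant $1/\pi$.

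For the second inequality, the idea is to absorb the factors $\Delta^k$ and $\Delta^l$ into the functional calculus. Set $G(\lambda):=\lambda^{2(k+l)}F(\lambda)$; this is an even Schwartz function on $\R$, and by functional calculus applied to the self-adjoint operator $\sqrt{\Delta}$,
\[
\Delta^k F(\sqrt{\Delta})\Delta^l u \;=\; G(\sqrt{\Delta})u,
\]
the manipulation being justified because $u\in\D(M)$ lies in $\bigcap_{N\ge 0}\Dom(\Delta^N)$. Now apply the first part of the lemma to $G$ in place of $F$. Using the standard identity $\widehat{t^n F(t)}(s) = i^{\,n}\hat F^{(n)}(s)$ with $n=2(k+l)$, we obtain $|\hat G(s)|=|\hat F^{(2k+2l)}(s)|$, which yields the second estimate.

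The only real point requiring care is justifying (i) the Bochner-integrability in $\mathcal{B}(L^2(M))$ needed to commute the norm with the integral---but this was already established above, where it was observed that the integrand in (\ref{fourier}) is dominated by $|\hat F(s)|\in L^1(\R)$---and (ii) the functional-calculus identity $\Delta^k F(\sqrt{\Delta})\Delta^l = G(\sqrt{\Delta})$ on smooth compactly supported $u$, which amounts to the composition rule for Borel functions of an unbounded self-adjoint operator applied on the common dense invariant domain $\D(M)\subseteq D^\infty$. Neither is a genuine obstacle, so the lemma reduces essentially to a bookkeeping exercise around (\ref{fourier}).
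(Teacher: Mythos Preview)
Your proposal is correct and follows essentially the same route as the paper: both arguments apply the Fourier representation \eqref{fourier} together with the bound $\|\cos(s\sqrt{\Delta})\|\le 1$ for the first estimate, and then reduce the second estimate to the first via the functional-calculus identity $\Delta^kF(\sqrt{\Delta})\Delta^l=(t^{2(k+l)}F(t))(\sqrt{\Delta})$ combined with $(t^{2(k+l)}F)^{\wedge}=(-1)^{k+l}\hat F^{(2k+2l)}$. The only cosmetic difference is that the paper calls $\cos(s\sqrt{\Delta})$ unitary rather than a contraction and writes out the integral again instead of invoking the first part, but the content is identical.
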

\begin{proof}
The first estimate follows from \eqref{fourier}:  
Since the operator $\cos(s\sqrt{\Delta})$  has operator norm $\le 1$ we have
\begin{align*}
\|F(\sqrt{\Delta})u\|&=\|\frac{1}{2\pi}\int_{-\infty}^{\infty}\hat F(s)\cos(s\sqrt{\Delta})u\,ds\|\\
&\leq\|u\|\frac{1}{\pi}\int_0^{\infty}|\hat{F}(s)|ds.
\end{align*}
Concerning the second inequality, note that by functional calculus we have
\begin{eqnarray*}
\Delta^kF(\sqrt{\Delta})\Delta^l u &=& (t^{2k+2l}F(t)) (\sqrt{\Delta}) u \\
&=& \frac{1}{2\pi} \int_{-\infty}^\infty  (t^{2k+2l}F(t))^{\wedge} \cos(s\sqrt{\Delta})u\,ds\\
&=& \frac{1}{2\pi} (-1)^{k+l} \int_{-\infty}^\infty {\hat F}^{(2k+2l)}(s) \cos(s\sqrt{\Delta})u\,ds,
\end{eqnarray*}
from which the claim follows as above.
\end{proof}

For any $s\in \R$ and any $u\in \D(M)$ we set
$$\|u\|_s := \| (1+\Delta)^{s/2} u \|_{L^2(M)}$$
The Sobolev space of order $s$ is the completion of $\D(M)$ with respect to this norm. 
We set $H^\infty(M) := \bigcap_{s\in \R} H^s(M)$ and denote by $\hscomp(M)$ the  space of compactly supported 
elements of $H^s(M)$.

The following result will be essential for our approach to regularizing distributions
on complete Riemannian manifolds. For the notion of (properly supported) smoothing (or regularizing) operator we
refer to \cite{CP}, ch.\ 1.4. 

\begin{prop}\label{Gsmooth} Let $F\in {\mathcal S}(\R)$ be even. Then
\begin{itemize}
\item[(i)]  The operator $F(\sqrt{\Delta}): {\mathcal D}'(M) \to \cinfty(M)$ is a smoothing operator.
\item[(ii)] Let $c>0$ and let $\phi_c\in \D(\R)$ be such that  $\supp(\phi_c) \subseteq [-2c,2c]$
and $\phi_c \equiv 1$ on $(-c,c)$. Then
$$
T(\sqrt{\Delta}) := \frac{1}{2\pi} \int_{-\infty}^\infty \phi_c(s) \hat F(s) \cos(s\sqrt{\Delta})\, ds,
$$
is a properly supported smoothing operator.
\end{itemize}
\end{prop}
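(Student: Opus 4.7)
For part (i), my approach is to use formula (\ref{fourier}) together with Lemma \ref{norm_estimates} to derive mapping properties of $F(\sqrt{\Delta})$ strong enough to force smoothness of the Schwartz kernel. Since $F\in\mathcal{S}(\R)$ even implies $t^{2k+2l}F(t)\in\mathcal{S}(\R)$ even for every $k,l\ge 0$, the Lemma yields
\[
\|\Delta^k F(\sqrt{\Delta})\Delta^l u\|_{L^2(M)}\le C_{k,l}\|u\|_{L^2(M)}\qquad(u\in\D(M)).
\]
Taking $l=0$ shows $F(\sqrt{\Delta})u\in H^{2k}_{\mathrm{loc}}(M)$ for every $k$, hence in $\cinfty(M)$ by local Sobolev embedding; since $F(\sqrt{\Delta})^*=\bar F(\sqrt{\Delta})$ satisfies the same bound, one obtains the analogous statement for compactly supported distributions by duality.

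To upgrade this to joint smoothness of the Schwartz kernel $K_F$ on $M\times M$, I would combine the two-sided estimates for all $k,l$ with the Schwartz kernel theorem. For cutoffs $\chi_1,\chi_2\in\D(M)$, the operator $(1+\Delta)^k(\chi_1 F(\sqrt{\Delta})\chi_2)(1+\Delta)^l$ is bounded on $L^2(M)$ up to commutator terms controlled inductively via the same bound. Iterating this together with the ellipticity of $1+\Delta_x+\Delta_y$ on $M\times M$ yields $K_F\in H^{N}_{\mathrm{loc}}(M\times M)$ for all $N$, hence $K_F\in\cinfty(M\times M)$ by Sobolev embedding.

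For part (ii), since $\hat F(s)\cos(s\sqrt{\Delta})$ is even in $s$, replacing $\phi_c$ by its even part $\tfrac{1}{2}(\phi_c(s)+\phi_c(-s))$ does not alter $T(\sqrt{\Delta})$, so we may assume $\phi_c$ is even. Then $\phi_c\hat F\in\D(\R)$ is even, and by Paley-Wiener its inverse Fourier transform $G$ is an even entire function of exponential type, in particular $G\in\mathcal{S}(\R)$. Comparing with formula (\ref{fourier}) gives $T(\sqrt{\Delta})=G(\sqrt{\Delta})$, so $T(\sqrt{\Delta})$ is smoothing by part (i). Proper support follows from Proposition \ref{propsupp}: $\supp\phi_c\subseteq[-2c,2c]$ forces $\supp(T(\sqrt{\Delta})u)\subseteq B_{2c}(\supp u)$, equivalently $\supp K_T\subseteq\{(x,y):d_g(x,y)\le 2c\}$. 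Since $(M,g)$ is complete, Hopf-Rinow ensures closed $2c$-balls are compact, so the projections of $\supp K_T\cap(K\times M)$ and $\supp K_T\cap(M\times K)$ are compact for every compact $K\subseteq M$.

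The main technical obstacle is the joint-smoothness argument in part (i): the one-sided estimates give smoothness of $K_F$ in each variable separately, but upgrading to joint $\cinfty$ regularity on $M\times M$ requires careful handling of the elliptic operator $1+\Delta_x+\Delta_y$ together with the commutator terms arising from localization. Everything else reduces directly to Lemma \ref{norm_estimates}, Proposition \ref{propsupp}, and completeness of $(M,g)$.
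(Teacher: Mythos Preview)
Your approach is essentially the paper's: for (i), use the two-sided estimates $\|\Delta^k F(\sqrt{\Delta})\Delta^l u\|_{L^2}\le C_{k,l}\|u\|_{L^2}$ from Lemma~\ref{norm_estimates} together with elliptic regularity for $\Delta_x+\Delta_y$ on $M\times M$; for (ii), observe that $T(\sqrt{\Delta})=G(\sqrt{\Delta})$ for the even Schwartz function $G=(\phi_c\hat F)^{\vee}$, apply (i), and then invoke finite propagation speed (Prop.~\ref{propsupp}) plus completeness of $g$ for proper support.

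The one point where you diverge is also where you locate your ``main technical obstacle'', and the paper's route shows that this obstacle is self-inflicted. You localize with cutoffs $\chi_1,\chi_2$ and then have to control the commutators $[\Delta^k,\chi_1]$, $[\Delta^l,\chi_2]$. The paper avoids this entirely: it never introduces cutoffs. Instead it observes that the Schwartz kernel of $\Delta^l F(\sqrt{\Delta})\Delta^k$ is exactly $\Delta_y^k\Delta_x^l K$ (a one-line duality computation), and the bilinear bound $|\langle K,\psi_1\otimes\psi_2\rangle|\le C\|\psi_1\|_{L^2}\|\psi_2\|_{L^2}$ coming from Lemma~\ref{norm_estimates} then applies directly to each $\Delta_y^k\Delta_x^l K$. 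Feeding this into elliptic regularity for $\Delta_x\otimes 1+1\otimes\Delta_y$ gives $K\in\cinfty(M\times M)$ with no commutator bookkeeping at all. Your argument is not wrong, but the cutoffs add work that the kernel-level formulation sidesteps.
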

\begin{proof}
(i) This is a consequence of  ellipticity of the  Laplace operator  $\Delta$. For brevity, 
we set $A := F(\sqrt{\Delta})$. Given any $\varphi\in \D(M\times M)$, let
$$
\langle K,\vphi\rangle := \int_M (A\vphi(x,\,.\,))(x)\,dx.
$$
Then for all $\psi_1$, $\psi_2 \in \D(M)$,  
$$
\langle K, \psi_1\otimes \psi_2\rangle = \int_M \psi_1(x) A\psi_2(x) \,dx = \langle A\psi_2, \psi_1\rangle,
$$
so $K$ is the distributional kernel of $A: \D(M) \to \cinfty(M) \subseteq \D'(M)$. We have to show that
$K\in \cinfty(M\times M)$. By Lemma \ref{norm_estimates}, for all $k\in \N_0$ and all $\psi\in \D(M)$,
$\|A\psi\|_k \le C_k \|\psi\|_{L^2(M)}$, hence
$$
|\langle K,\psi_1\otimes \psi_2\rangle| \le \int_M |\psi_1(x)| |A\psi_2(x)| \,dx \le C_0 \|\psi_1\|_{L^2(M)}
\|\psi_2\|_{L^2(M)}
$$
Since $\D(M)\otimes \D(M)$ is dense in $L^2(M\times M)$ this implies that $K\in L^2(M\times M)$. Now set
$A_{k,l} := \Delta^l A \Delta^k$. Then the kernel of $A_{k,l}$ is given by $K_{k,l}:= \Delta^k_y \Delta^l_x K$,
since
$$
\langle K_{k,l}, \psi_1\otimes \psi_2\rangle = \langle K, \Delta^l \psi_1\otimes \Delta^k\psi_2 \rangle
= \langle A \Delta^k\psi_2, \Delta^l \psi_1\rangle = \langle A_{k,l} \psi_2,\psi_1\rangle .
$$
As above it follows that $K_{k,l} \in L^2(M\times M)$ for all $k$, $l$, hence 
by elliptic regularity for $\Delta_x\otimes 1+1\otimes \Delta_y$  it follows that $K$ is  smooth.

(ii) $T(\sqrt{\Delta})$ is a smoothing operator by (i). To establish proper support, by \cite{F}, Prop.\ 8.12 we have to show:
\begin{itemize}
\item[(a)] $\forall K\comp M$ $\exists L\Subset M$ such that $u\in \D(K) \Rightarrow T(\sqrt{\Delta})u \in \D(L)$.	
\item[(b)] $\forall K\comp M$ $\exists L\Subset M$ such that $u=0$ on $L$ $\Rightarrow T(\sqrt{\Delta})u = 0$
on $K$.
\end{itemize}
Both (a) and (b) follow from the finite speed of propagation of $\cos(\sqrt{\Delta})$ which implies that  
there exists some $\tilde C>0$ such that for any $u\in L^2(M)$,
the support of $T(\sqrt{\Delta})u$ is contained in a ball of radius $\tilde C$ around $\supp(u)$ (Prop.\ \ref{propsupp}). 
The result therefore follows from the properness of the complete metric $g$.
\end{proof}

\section{Embeddings}\label{embsec}
In this section we will employ the smoothing operators developed in Section \ref{wavesec} to construct optimal
embeddings of the space $\D'(M)$ of distributions on a complete Riemannian manifold $M$ into the algebra $\gs(M)$ of
generalized functions on $M$.

A set  $X\subset M\times M$ is called proper if the restriction of  the projections on both factors  $\pi_j:X\rightarrow M: j=1,2$  are proper maps.
Let $\Psi^{-\infty}_{\mathrm{prop}}(M)$ be the space of all operators $T:\smooth(M)\rightarrow\D(M)$ with smooth kernels with proper support in $M\times M$.
By a regularization process we mean a  net $T_{\varepsilon}$  of properly  supported smoothing operators which 
provides an approximate identity on  compactly supported distributions. 
More precisely, we shall be interested in  rapidly converging regularization processes of the following kind:
 
 \begin{definition}\label{abstract_embedding}
  A parametrized family $(T_{\varepsilon})_{\eps\in I}$ of properly supported smoothing operators is
 called an {\em optimal regularization process} if
 \begin{enumerate}[(A)]
 \item \label{moderate} The regularization of any compactly supported distribution is of moderate growth. That is, for any continuous semi-norm 
$\rho$ on $\smooth(M)$ and any distribution $w\in \maE'(M)$ there
 exists an integer $N$ such that
\[
\rho(T_{\varepsilon}w) = O(\varepsilon^{N}) \qquad (\eps\to 0),
\]
i.e., $(T_\eps w) \in \esm(M)$.
\item\label{Identity} The net $(T_{\varepsilon})$ is an approximate identity: for each compactly supported distribution 
$w\in\maE'(M)$
\[\underset{\varepsilon \rightarrow 0}{\operatorname{lim}}T_{\varepsilon}w=w\quad \textrm{in}\,\,\maD'(M).\]

\item \label{local} 
Preservation of supports: For any $w\in \maE'(M)$, $\supp(w)$ equals the $\gs(M)$-support $\supp [(T_\eps w)]$
of the class $[(T_\eps w)]$.

\item\label{negligible} If $u\in\D(M)$ is a smooth compactly supported function  on M then for all 
continuous semi-norms $\rho$ on $\smooth(M)$ 
and given any integer $m$,
\[
\rho(T_{\varepsilon}u-u) = O(\varepsilon^m),
\]
i.e., $(T_\eps u - u) \in \ns(M)$. 

\item\label{singularity} Preservation of wavefront sets:  Setting $\iota_T: w\mapsto [(T_{\varepsilon}w)]$,
for any $w\in \maE'(M)$ we have 
$$
\WF(w) = \WF_g(\iota_T(w)).
$$ 

\end{enumerate}
\end{definition}

\noindent Given an optimal regularization process $(T_\eps)$, we obtain a linear embedding 
\begin{eqnarray*}
\iota_T: \maE'(M) &\to& \gs(M) \\
\iota_T(w) &=& [(T_\eps w)]
\end{eqnarray*}
(by (\ref{moderate}) and (\ref{Identity})). By (\ref{local}), $\iota_T$ extends to an embedding of $\D'(M)$ into $\gs(M)$ 
which preserves supports. More precisely, there is a unique sheaf morphism on $\D'(M)$ (also denoted by $\iota_T$) which extends
$\iota_T: \maE'(M) \to \gs(M)$. $\iota_T$ is a linear embedding that commutes with restrictions. Details on how to 
extend $\iota_T$ from $\maE'(M)$ to $\D'(M)$ can be found in \cite{DD}, Sec.\ 2 or in \cite{GKOS}, 1.2 (although carried
out for special cases of optimal embeddings in these references, the arguments given there, entirely sheaf-theoretic in nature, 
carry over to the general situation studied here). 
(\ref{negligible}) implies that $\iota_T$ renders $\cinfty(M)$ a faithful subalgebra of $\gs(M)$. 
Finally,
(\ref{singularity}) secures preservation of wavefront sets and, therefore, of singular supports under this embedding.
In particular, precisely the distributions that map into 
the subalgebra $\maG^{\infty}(M)$ under $\iota_T$ are smooth:
\[\iota_T(\maD'(M))\cap \maG^{\infty}(M)= \iota_T(\smooth(M)).\]

First, we provide some examples of optimal regularization processes:

\begin{eg}\label{funct_calc}
 Let $\Delta$ be  the Laplace operator associated to  a closed Riemannian manifold $M$. Let $F\in\Sch(\RR)$ be a Schwartz function 
on the reals such that $F$ is identically $1$ near the origin. Let $F_{\ep}(x):=F(\ep x)$. Then by applying standard functional 
calculus, $F_{\ep}(\Delta)$ is an optimal regularizing process: For a closed manifold, Weyl's estimates on the
spectrum of the Laplacian provide asymptotic bounds for the spectral counting function 
\[N_{\Delta}(\lambda)=\#\{\lambda_k|\,\lambda_k<\lambda\}.\] 
In fact, for $m=\operatorname{dim}(M)$ we have
\[N_{\Delta}(\lambda) \sim \frac{\operatorname{vol}(M)} {(4\pi)^{\frac{m}{2}}\Gamma(m/2+1)}\lambda^{\frac{m}{2}}.\]
Essentially, this suffices to obtain all the estimates in Definition \ref{abstract_embedding} (preservation
of wavefront sets follows as in Th.\ \ref{wfth} below). In addition, $F_{\ep}(\Delta)$  is invariant under isometries. We refer to \cite{D} for details. 
\end{eg}
\begin{eg}
 As in the original construction of Colombeau (cf., e.g., \cite{C1,C2,GKOS}) an optimal regularization process can be 
constructed from a mollifier $\rho\in\Sch(\RR^n)$ satisfying the following  conditions:
\begin{eqnarray}  \label{nomoment}
\int_{\RR^n}\rho(x)dx=1\quad\int_{\RR^n}x^{\alpha}\rho(x)dx=0\hskip 0.2in \alpha\in\NN_+^n.
\end{eqnarray}
Then the net of functions  $\rho_{\ep}(x):=\frac{1}{\ep^n}\rho(\frac{x}{\ep})$ is a delta net. 
Convolution with such a delta net provides an example of an optimal regularization process. 
For example, estimate (\ref{negligible}) from Definition \ref{abstract_embedding}  can be established in this setting
using Taylor's theorem and the moment conditions \eqref{nomoment} imposed on $\rho$.  Concerning 
\eqref{singularity} from Def.\ \ref{abstract_embedding},
see \cite{NPS,H}. An important characteristic of these approximate units is their equivariance with respect to 
the Euclidean translations.
\end{eg}

For $(M,g)$ a complete Riemannian manifold with Laplacian $\Delta$, $F\in \maS(\R)$ even, and $\phi_c$ as in
Prop.\ \ref{Gsmooth}, we additionally suppose that $F$ equals $1$ in a neighborhood of $0$ and that 
$\phi_c$ is even. For any $\eps\in I$ we set $F_\eps(s):=F(\eps s)$, and 
\begin{equation} \label{tepsdef}
\teps := \frac{1}{2\pi} \int_{-\infty}^\infty \phi_c(s) (F_\eps)^\wedge(s) \cos(s\sqrt{\Delta})\, ds.
\end{equation}
We will show that the family of smoothing operators $(\teps)_{\eps\in I}$ is an optimal regularization
process in the sense of Def.\ \ref{abstract_embedding}. Each $\teps$ is a
properly supported smoothing operator by Prop.\ \ref{Gsmooth} (ii). Furthermore, we note that the explicit form
of (\ref{tepsdef}) allows to view it as a generalized integral operator in the sense of \cite{BCD,Del}.

Turning first to Def.\ \ref{abstract_embedding} (\ref{moderate}), we have:
\begin{prop} \label{AProp}
Let $u\in \maE'(M)$. Then $(\teps u) \in \esm(M)$.
\end{prop}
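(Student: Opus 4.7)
The plan is to reduce moderateness to pointwise bounds on derivatives of $\teps u$ over compact sets, to convert these via the Schwartz kernel of $\teps$ into $L^2$-estimates, and to extract those from Lemma~\ref{norm_estimates} together with an explicit computation of the $\eps$-scaling on the Fourier side. First I would observe that the topology on $\smooth(M)$ is generated by the seminorms $\rho_{K,m}(v) = \sum_{|\beta|\le m}\sup_{x\in K}|\partial^\beta v(x)|$, so it suffices to fix a compact $K\subset M$ and an integer $m\ge 0$ and to establish $\rho_{K,m}(\teps u) = O(\eps^{-Q})$ for some $Q$. Since $u\in\maE'(M)$, the structure theorem for compactly supported distributions supplies a compact set $L$ with $\supp(u)\subset\mathrm{int}(L)$, an integer $N\ge 0$, and a constant $C_u>0$ such that $|\langle u,v\rangle|\le C_u\|v\|_{C^N(L)}$ for every $v\in\smooth(M)$.

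Next I would write $\teps = G_\eps(\sqrt\Delta)$ for the even Schwartz function $G_\eps$ determined by $\hat G_\eps(s) = \phi_c(s)\hat F_\eps(s)$, and denote by $K_\eps\in\smooth(M\times M)$ its kernel (Prop.~\ref{Gsmooth}). Then $\partial^\beta_x \teps u(x) = \langle u_y,\partial^\beta_x K_\eps(x,y)\rangle$, and the structure-theorem bound applied to $v = \partial^\beta_x K_\eps(x,\,\cdot\,)$ yields
\[
\rho_{K,m}(\teps u) \le C_u\,C_{K,m}\,\|K_\eps\|_{C^{m+N}(K\times L)}.
\]
Following the kernel argument from the proof of Prop.~\ref{Gsmooth}(i), the kernel of $\Delta^l\teps\Delta^k$ is $\Delta^k_y\Delta^l_x K_\eps$, so by Lemma~\ref{norm_estimates}
\[
\|\Delta^k_y\Delta^l_x K_\eps\|_{L^2(M\times M)} \le \|\Delta^l\teps\Delta^k\|_{\maB(L^2(M))} \le \frac{1}{\pi}\int_0^\infty|\hat G_\eps^{(2k+2l)}(s)|\,ds.
\]
Interior elliptic regularity for $\Delta_x\otimes 1+1\otimes\Delta_y$ on $M\times M$, together with the local Sobolev embedding $H^{2p}\hookrightarrow C^{m+N}$ for $p$ sufficiently large, bounds $\|K_\eps\|_{C^{m+N}(K\times L)}$ by a finite sum of these $L^2$-norms over $k+l\le p$. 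Finally the $\eps$-dependence is read off from $\hat F_\eps(s) = \eps^{-1}\hat F(s/\eps)$: a change of variables gives $\int|\hat F_\eps^{(n)}(s)|\,ds = \eps^{-n}\int|\hat F^{(n)}(t)|\,dt$, and Leibniz applied to $\hat G_\eps = \phi_c\hat F_\eps$, together with compactness of $\supp\phi_c$, gives $\int_0^\infty|\hat G_\eps^{(n)}(s)|\,ds = O(\eps^{-n})$ as $\eps\to 0$. Combining these estimates yields $\rho_{K,m}(\teps u) = O(\eps^{-2p})$, which is the desired moderateness.

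The main technical point is the passage from the global $L^2$-bounds on $\Delta^k_y\Delta^l_x K_\eps$ to pointwise derivative bounds of $K_\eps$ on the compact set $K\times L$; since the estimate is only needed on a compact subset of $M\times M$, this reduces via a smooth cutoff to standard interior elliptic a priori bounds for $\Delta_x+\Delta_y$, after which the remaining inequalities combine cleanly once the Fourier-side scaling $O(\eps^{-n})$ is in hand.
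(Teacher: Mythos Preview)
Your detour through the Schwartz kernel introduces a genuine gap. The displayed inequality
\[
\|\Delta^k_y\Delta^l_x K_\eps\|_{L^2(M\times M)} \le \|\Delta^l\teps\Delta^k\|_{\maB(L^2(M))}
\]
is the wrong way round: the $L^2(M\times M)$-norm of an integral kernel is the Hilbert--Schmidt norm of the operator, which dominates (rather than is dominated by) the operator norm. Lemma~\ref{norm_estimates} only gives you the latter. On a non-compact $M$ there is no reason for $K_\eps$ to lie in $L^2(M\times M)$ at all, so even making sense of the left-hand side requires localization first. (The qualitative argument in the proof of Prop.~\ref{Gsmooth}(i) that you are following has the same weakness; it does not, in any case, yield a norm bound.) Your route is salvageable---for instance by writing $M_{\chi_1}\Delta^l\teps\Delta^k M_{\chi_2} = [M_{\chi_1}(1+\Delta)^{-q}]\cdot[(1+\Delta)^{q}\Delta^l\teps\Delta^k]\cdot M_{\chi_2}$ with cut-offs $\chi_i\in\D(M)$ and $q$ large enough that the first factor is Hilbert--Schmidt---but this is extra work you have not done.

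The paper's argument avoids the kernel entirely. It uses finite propagation speed to confine $\supp(\teps w)$ to a fixed compact set, writes $w=(1+\Delta)^{-s_0/2}u$ with $u\in L^2(M)$, and then estimates $\|\teps w\|_s = \|(1+\Delta)^l\teps u\|_{L^2(M)}$ directly via Lemma~\ref{norm_estimates} (here the commutation of $\teps$ with powers of $1+\Delta$ is used). The local Sobolev embedding then converts these Sobolev bounds into the required $C^k$-seminorm bounds. This is both shorter and sidesteps the Hilbert--Schmidt issue; your $\eps$-scaling computation at the end is correct and is exactly what the paper needs as well.
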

\begin{proof}
It follows immediately from (\ref{tepsdef}) that $(\eps,x)\mapsto (\teps w)(x)$ is smooth,
so it remains to establish the moderateness estimates for the net $(\teps w)$.
Since $w\in \maE'(M)$ there exists some $s_0\in \R$ with $w\in \hsncomp(M)$. By Prop.\ \ref{propsupp}, there exists some
fixed compact set $K\comp M$ such that $\supp(\teps w) \subseteq K$ for all $\eps\in I$. Moreover,
Prop.\ \ref{Gsmooth} implies that each $\teps w$ is in $H^\infty_{\mathrm{cp}}(M)$. 
By the local Sobolev embedding theorem it therefore suffices to show that for each $s\in \R$ there exists 
some $N\in \N$ such that 
$$
\|\teps w\|_s = O(\eps^{-N}).
$$
In fact (by enlarging $s$ if necessary) we may assume in addition that $l:=\frac{s-s_0}{2}\in \N$. 
Let $u$ be the unique element of $L^2(M)$ such that $w=(1+\Delta)^{-s_0/2}u$. Then
\begin{eqnarray*}
 \|\teps w\|_s &=& \|(1+\Delta)^{s/2}\teps(1+\Delta)^{-s_0/2} u)\|_{L^2(M)} \\
&=& \|(1+\Delta)^l\teps u\|_{L^2(M)}\\ 
&\le& \sum_{j=0}^l 
\binom{l}{j}
\|\Delta^j\teps u\|_{L^2(M)}.
\end{eqnarray*}
Now write $\phi_c = (\psi_c)^\wedge$ for some $\psi_c \in \maS(\R)$. Then
$$
\teps u = \int_{-2c}^{2c} (\psi_c * F_\eps)^\wedge(t) \cos(t\sqrt{\Delta})u\,dt .
$$
Since $\psi_c * F_\eps$ is even, Lemma \ref{norm_estimates} implies that
$$
\|\Delta^j \teps u\|_{L^2(M)} \le \|u\|_{L^2(M)} \frac{1}{\pi}\int_{0}^\infty |[(\psi_c * F_\eps)^\wedge]^{(2j)}(t)|\,dt
$$
From this, we finally obtain
\begin{eqnarray}\label{Aproplast}
\|\Delta^j \teps u\|_{L^2(M)} &\le& \|u\|_{L^2(M)} \frac{1}{\pi}\int_{0}^{2c} |[(\phi_c(t) \frac{1}{\eps} \hat F(\frac{t}{\eps}))]^{(2j)}(t)|\,dt\\
&=& O(\eps^{-2l-1}) \nonumber
\end{eqnarray}
for $0\le j\le l$.
\end{proof}

We may use the method of proof of Prop.\ \ref{AProp} to show that the embedding $\iota_T$ is
in fact independent of the particular choice of the cut-off function $\phi_c$:
\begin{lem} \label{philemma}
Suppose that $\phi^1_{c_1}$ and $\phi^2_{c_2}$ are cut-off functions as in (\ref{tepsdef}) and denote the 
corresponding regularization operators by $T_\eps^{1}(\sqrt{\Delta})$ and $T_\eps^{2}(\sqrt{\Delta})$,
respectively. Then for any $w\in \maE'(M)$, $\iota_{T^1}(w) = \iota_{T^2}(w)$.
\end{lem}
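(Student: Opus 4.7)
I would show that $(T^1_\eps w - T^2_\eps w)_\eps \in \ns(M)$ for every $w \in \maE'(M)$; since $\iota_{T^i}(w)$ is defined as the class of $(T^i_\eps w)$ modulo $\ns(M)$, this directly yields the claimed equality in $\gs(M)$.

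Set $\psi := \phi^1_{c_1} - \phi^2_{c_2}$. Since both cut-offs equal $1$ in a neighbourhood of $0$, there is some $a>0$ with $\psi \equiv 0$ on $[-a,a]$; moreover $\psi$ is an even element of $\D(\R)$. Subtracting the two instances of (\ref{tepsdef}) gives
$$
T^1_\eps(\sqrt{\Delta}) - T^2_\eps(\sqrt{\Delta}) \;=\; \frac{1}{2\pi}\int_{-\infty}^{\infty} \psi(s)\,(F_\eps)^\wedge(s)\,\cos(s\sqrt{\Delta})\,ds,
$$
and writing $h_\eps(s) := \psi(s)(F_\eps)^\wedge(s) = \psi(s)\cdot \eps^{-1}\hat F(s/\eps)$, the central estimate is
$$
\int_{0}^{\infty}\bigl|h_\eps^{(m)}(s)\bigr|\,ds \;=\; O(\eps^N) \qquad (\forall\,m,N\in\N).
$$
This follows from Leibniz's rule: on $\supp\psi$ one has $|s|\ge a$, so $|s/\eps|\ge a/\eps\to\infty$, and the Schwartz rapid decrease of each derivative $\hat F^{(m-k)}$ yields the bound $|\hat F^{(m-k)}(s/\eps)|=O(\eps^M)$ uniformly on $\supp\psi$ for every $M$, absorbing any factor $\eps^{-1-(m-k)}$.

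Having established this, the remainder of the argument mirrors Proposition \ref{AProp}. Given $w\in\maE'(M)$, choose $s_0\in\R$ with $w\in\hsncomp(M)$ and $u\in L^2(M)$ with $w=(1+\Delta)^{-s_0/2}u$. By finite propagation speed (Prop.\ \ref{propsupp}) the supports of $T^1_\eps w$ and $T^2_\eps w$ lie in a common compact $K\comp M$ independent of $\eps$. For any integer $k\ge s_0$ with $l:=(k-s_0)/2\in\N$, the binomial expansion of $(1+\Delta)^l$ combined with Lemma \ref{norm_estimates} applied to the above Fourier representation gives
$$
\|T^1_\eps w - T^2_\eps w\|_k \;\le\; \sum_{j=0}^l \binom{l}{j}\|\Delta^j(T^1_\eps-T^2_\eps)u\|_{L^2(M)} \;\le\; \frac{\|u\|_{L^2(M)}}{\pi}\sum_{j=0}^l \binom{l}{j}\int_{0}^{\infty}|h_\eps^{(2j)}(s)|\,ds.
$$
By the key estimate the right-hand side is $O(\eps^N)$ for every $N$, and uniform compact support together with the local Sobolev embedding theorem upgrades this to rapid $\eps$-decay in every continuous seminorm on $\cinfty(M)$. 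Hence $(T^1_\eps w - T^2_\eps w)\in\ns(M)$.

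\textbf{Main obstacle.} The only non-routine ingredient is the key $L^1$-estimate on the derivatives of $h_\eps$, which depends essentially on the \emph{positive} distance $a$ between $\supp\psi$ and the origin; merely having $\phi^1_{c_1}(0)=\phi^2_{c_2}(0)$ would leave $\psi$ vanishing only to finite order at $0$ and would not suffice. Granted this separation, the Schwartz decay of $\hat F$ absorbs all negative powers of $\eps$ and the rest of the proof is a mechanical adaptation of the moderateness argument from Prop.\ \ref{AProp}.
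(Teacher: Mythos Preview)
Your proof is correct and follows essentially the same approach as the paper's: reduce to estimating $\|\Delta^j(T^1_\eps-T^2_\eps)u\|_{L^2}$ via Lemma~\ref{norm_estimates} and show the relevant $L^1$-integral is $O(\eps^N)$ by exploiting that $\psi=\phi^1_{c_1}-\phi^2_{c_2}$ vanishes on a neighbourhood of $0$. The only cosmetic difference is in how that last estimate is obtained: you use directly that on $\supp\psi$ the argument $s/\eps\ge a/\eps$ is large so Schwartz decay of $\hat F$ kills all negative powers of $\eps$, whereas the paper substitutes $t=\eps r$ and uses $|\tilde\phi^{(p)}(\eps r)|\le C_m(\eps r)^m$ by Taylor expansion at $0$.
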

\begin{proof}
Set $\tilde \phi := \phi^1_{c_1} - \phi^2_{c_2}$. Using the assumptions and notations from 
the proof of Prop.\ \ref{AProp}, we have to estimate 
$$
\lt{\Delta^j (T_\eps^{1}(\sqrt{\Delta})u - T_\eps^{2}(\sqrt{\Delta})u)} .
$$
Since 
$$
T_\eps^{1}(\sqrt{\Delta})u - T_\eps^{2}(\sqrt{\Delta})u =
\frac{1}{2\pi} \int_{-\infty}^\infty \tilde \phi(s) (F_\eps)^\wedge(s) \cos(s\sqrt{\Delta})u\, ds,
$$
analogous to (\ref{Aproplast}) the $L^2$-norm of this expression is bounded by 
$$
\|u\|_{L^2(M)} \frac{1}{\pi}\int_{0}^\infty |[(\tilde\phi(t) \frac{1}{\eps} \hat F(\frac{t}{\eps}))]^{(2j)}(t)|\,dt.
$$
A typical term to estimate therefore is 
$$ 
\int_{0}^\infty |{\tilde\phi}^{(p)}(\eps r) {\hat F}^{(q)}(r)|\,dr \quad(p,q \in \N_0)
$$
Since $F\equiv 1$ near $0$, all higher moments $\int_{-\infty}^\infty r^k \hat F(r)\,dr$ of 
$\hat F$ ($k\ge 1$) vanish.  Moreover, $\tilde \phi$ is identically zero in a neighborhood of $0$, 
so Taylor expansion of ${\tilde\phi}^{(p)}(\eps r)$ around zero up to order $m-1$ implies that
the above integral is of order $\eps^m$ for any $m\in \N$.
\end{proof}

Next, we establish suitable convergence of $\iota_T(w)$ to $w$:
\begin{prop} \label{Bprop}
Let $w\in \maE'(M)$. Then $\teps w \to w$ in $\D'(M)$.
\end{prop}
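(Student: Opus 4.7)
The natural approach is duality: for an arbitrary test function $\varphi \in \D(M)$, I want to show that $\langle \teps w, \varphi \rangle \to \langle w, \varphi \rangle$. Because $F$ and $\phi_c$ are real and even, the multiplier $\phi_c(s)(F_\eps)^\wedge(s)$ is a real-valued function of $s$, and self-adjointness of $\cos(s\sqrt{\Delta})$ on $L^2(M)$ forces $\teps$ to be symmetric on $\D(M)$. Since the kernel of $\teps$ is smooth and properly supported (Prop.\ \ref{Gsmooth}(ii)), Fubini extends this to the distributional identity $\langle \teps w, \varphi \rangle = \langle w, \teps \varphi \rangle$ for every $w \in \maE'(M)$, $\varphi \in \D(M)$. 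Because $w$ is continuous on $\maE(M)$, the proposition then reduces to showing $\teps \varphi \to \varphi$ in $\maE(M)$ for each $\varphi \in \D(M)$. By Prop.\ \ref{propsupp} the supports of the $\teps \varphi$ stay in one fixed compact set, so it suffices to prove convergence in every Sobolev norm $\|\cdot\|_s$ and then invoke local Sobolev embedding.

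To control the Sobolev norms I would split
\[
\teps \varphi - \varphi = \bigl(\teps - F_\eps(\sqrt{\Delta})\bigr)\varphi + \bigl(F_\eps(\sqrt{\Delta}) - I\bigr)\varphi.
\]
For the second summand, $F(0)=1$ together with the boundedness of $F$ lets me apply dominated convergence to the symbol $(1+t^2)^{s/2}(F_\eps(t)-1)$ against the spectral measure of $\sqrt{\Delta}$ acting on $\varphi \in H^\infty(M)$, giving $\|(F_\eps(\sqrt{\Delta})-I)\varphi\|_s \to 0$ for every $s \ge 0$. For the first summand, the Fourier representation (\ref{fourier}) yields
\[
\teps \varphi - F_\eps(\sqrt{\Delta})\varphi = \frac{1}{2\pi}\int_{-\infty}^{\infty}(\phi_c(s)-1)(F_\eps)^\wedge(s)\cos(s\sqrt{\Delta})\varphi\,ds,
\]
and under the standing assumption that $\phi_c \equiv 1$ in a neighborhood of $0$ (which by Lemma \ref{philemma} makes the embedding cutoff-independent), the function $\phi_c - 1$ vanishes near $0$. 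I can then replay the argument from the proof of Lemma \ref{philemma}: Taylor expand $(\phi_c-1)^{(p)}(\eps r)$ about $0$ and use the vanishing of all higher moments of $\hat F$ (a consequence of $F \equiv 1$ near $0$) to obtain $\|\Delta^j(\teps - F_\eps(\sqrt{\Delta}))\varphi\|_{L^2(M)} = O(\eps^m)$ for every $m$ and every $j$, hence the analogous bound in each $\|\cdot\|_s$.

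Combining the two estimates gives $\|\teps \varphi - \varphi\|_s \to 0$ for every $s$, which finishes the plan. The one subtlety I expect is the rigorous passage from $L^2$-symmetry of $\teps$ to the distributional pairing identity used at the outset; this relies on the explicit smooth symmetric kernel of $\teps$ (its symmetry inherited from the self-adjointness of $\cos(s\sqrt{\Delta})$ for each $s$ and the reality of $\phi_c(s)(F_\eps)^\wedge(s)$) together with Fubini, and should pose no real difficulty. Apart from that, the proof is essentially a repackaging of the estimates already established in Prop.\ \ref{AProp} and Lemma \ref{philemma}.
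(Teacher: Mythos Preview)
Your argument is correct but takes a different route from the paper's, which is shorter and more direct. The paper writes $w=(1+\Delta)^k u$ with $u\in L^2(M)$ and $k\in\N_0$, uses that $\teps$ (being a function of $\sqrt{\Delta}$) commutes with $(1+\Delta)^k$ to reduce the claim to $\teps u\to u$ in $L^2(M)$, and then obtains the latter by a single application of dominated convergence to the integral representation after the substitution $s=\eps r$. Your route via the symmetry of $\teps$ and the convergence $\teps\varphi\to\varphi$ in $\cinfty(M)$ for test functions $\varphi$ is sound; it is longer, but it yields as a by-product a qualitative version of Prop.~\ref{DProp} and makes explicit the structural fact that $\teps$ is self-adjoint. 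One simplification worth noting: for the summand $(\teps-F_\eps(\sqrt\Delta))\varphi$ you do not actually need the Taylor/moment machinery of Lemma~\ref{philemma}. Since $\phi_c-1$ vanishes identically on some interval $(-\delta,\delta)$, the substitution $s=\eps r$ confines the integrand to $|r|>\delta/\eps$, and rapid decay of $\hat F$ alone gives $\int|\phi_c(\eps r)-1|\,|\hat F(r)|\,dr=O(\eps^m)$ for every $m$; because $\Delta^j$ may simply be moved onto $\varphi\in\D(M)$, this already controls every Sobolev norm.
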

\begin{proof}
We may write $w=(1+\Delta)^k u$ for some $u\in L^2(M)$ and some $k\in \N_0$. 
By Prop.\ \ref{norm_estimates} it therefore suffices to show that $\teps u - u \to 0$ in
$L^2(M)$ in order to ensure that $\teps w \to w$ in $H^{-2k}(M)$ and hence in $\D'(M)$.
Now
\begin{eqnarray*}
&&\lt{\teps  u -  u} \\
&&\le \int_{-\infty}^\infty \lt{\phi_c(\eps r) \hat F(r)(\cos(\eps r \sqrt{\Delta})u -  u)} \, dr \to 0
\end{eqnarray*}
by dominated convergence (noting that the integrand is pointwise bounded by $2\|\phi_c\|_{L^\infty}
\|u\|_{L^2} |\hat F(r)|$).
\end{proof}

This settles Def.\ \ref{abstract_embedding}, (\ref{Identity}). As was remarked after Def.\ \ref{abstract_embedding}, we thereby
obtain a linear embedding $\iota_T$ of $\maE'(M)$ into $\gs(M)$. Our next result establishes preservation of 
supports under $\iota_T$.
\begin{prop}
For any $w\in \maE'(M)$, $\supp(w) = \supp(\iota_T(w))$.
\end{prop}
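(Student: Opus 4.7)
The plan is to prove the two inclusions separately, using the finite propagation speed of $\cos(s\sqrt{\Delta})$ for one direction and the convergence result from Prop.\ \ref{Bprop} for the other.

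First, for $\supp(\iota_T(w)) \subseteq \supp(w)$, I would fix $x \notin \supp(w)$ and set $d := d_g(x,\supp(w)) > 0$. Independence of the embedding from the cut-off $\phi_c$ (Lemma \ref{philemma}) lets me replace $\phi_c$ by a test function $\phi_{c'}$ supported in $[-2c',2c']$ with $2c' < d/2$; the resulting operator $T^{c'}_\eps(\sqrt{\Delta})$ yields the same class $\iota_T(w)\in \gs(M)$. By Prop.\ \ref{propsupp}, $\supp(\cos(s\sqrt{\Delta})w)\subseteq B_{|s|}(\supp(w))$, and since the $s$-integration in the definition of $T^{c'}_\eps$ is confined to $|s|\le 2c'$, we get $\supp(T^{c'}_\eps w) \subseteq B_{2c'}(\supp(w))$ for every $\eps$. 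The open ball $B_{d/2}(x)$ is therefore disjoint from $\supp(T^{c'}_\eps w)$, so the representative vanishes identically there, and hence $x\notin \supp(\iota_T(w))$.

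For the reverse inclusion $\supp(w) \subseteq \supp(\iota_T(w))$, I would take $x \notin \supp(\iota_T(w))$ and choose an open neighborhood $U$ of $x$ such that the restriction $(T_\eps w)|_U$ lies in $\ns(U)$. For any test function $\varphi\in \D(U)$, Prop.\ \ref{Bprop} gives
\[
\langle w, \varphi\rangle = \lim_{\eps\to 0} \int_M (T_\eps w)(y)\,\varphi(y)\, dg(y).
\]
Since $\supp(\varphi)$ is compact in $U$, the sup-norm estimate on $(T_\eps w)|_{\supp(\varphi)}$ provided by negligibility gives $|\int_M (T_\eps w)\varphi\,dg| \le \|\varphi\|_{L^1}\cdot \|T_\eps w\|_{L^\infty(\supp\varphi)} = O(\eps^N)$ for every $N$, so $\langle w,\varphi\rangle = 0$. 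Hence $w$ vanishes on $U$, so $x\notin \supp(w)$.

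The only delicate point is the first direction, where one must actually shrink the cut-off parameter $c$ locally to push the support of the representative arbitrarily close to $\supp(w)$; this is precisely what Lemma \ref{philemma} is designed to allow. The second direction is a short pairing argument, bridging the topologies of $\D'(M)$ and $\gs(M)$ via the $L^\infty$-type seminorm on a fixed compact set.
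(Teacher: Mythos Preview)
Your argument is correct. For the inclusion $\supp(w)\subseteq\supp(\iota_T(w))$ it coincides with the paper's proof: pair against $\varphi\in\D(U)$, use Prop.\ \ref{Bprop}, and read off $\langle w,\varphi\rangle=0$ from negligibility of the representative on $\supp(\varphi)$.

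For the inclusion $\supp(\iota_T(w))\subseteq\supp(w)$ you take a genuinely different and more economical route. The paper keeps the fixed cut-off $\phi_c$, substitutes $s=\eps r$ in \eqref{tepsdef}, and splits the resulting $r$-integral at $|r|=2c/\sqrt{\eps}$: the inner piece is supported in $B_{2c\sqrt{\eps}}(\supp(w))$ by finite propagation speed, while the tail is $O(\eps^m)$ in every Sobolev norm because $\hat F$ is rapidly decreasing. This forces the paper to first treat continuous $w$ (so that the integrand can be evaluated pointwise) and then reduce the general case via $w=(1+\Delta)^k v$. Your approach sidesteps both the splitting and the regularity reduction: by Lemma \ref{philemma} you may replace the representative by one built from $\phi_{c'}$ with $2c'<d_g(x,\supp(w))/2$, and that representative vanishes identically near $x$ for every $\eps$. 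One small remark: Prop.\ \ref{propsupp} is formulated for $L^2$-data, so to be fully rigorous you should phrase the support conclusion at the level of the Schwartz kernel of the smoothing operator $T^{c'}_\eps$ (it is supported in $\{(x,y):d_g(x,y)\le 2c'\}$, as follows from Prop.\ \ref{propsupp} applied to test functions together with symmetry of $T^{c'}_\eps$), which then yields $\supp(T^{c'}_\eps w)\subseteq B_{2c'}(\supp(w))$ for arbitrary $w\in\maE'(M)$ without further ado.
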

\begin{proof}
Let $x \in M\setminus \supp(w)$ and choose a compact neighborhood $K$ of $x$ such that $K\cap \supp(w) = \emptyset$.
Suppose first that $w$ is continuous. Then
$$
\iota_T(w)_\eps(x) =\int_{-\infty}^\infty \phi_c(\eps r) \hat F(r) \cos(\eps r \sqrt{\Delta})w(x)\, dr.
$$
We split this integral in one part over $|r|<2c/\sqrt{\eps}$ and a second part where $|r|>2c/\sqrt{\eps}$.
For the first part we note that by Prop.\ \ref{propsupp}, $\supp(\cos(\eps r\sqrt{\Delta})w) \subseteq
B_{\eps |r|}(\supp(w)) \subseteq B_{\sqrt{\eps} 2c}(\supp(w))$ for all $|r|<2c/\sqrt{\eps}$. Hence for small
$\eps$, the support of the first term lies in the complement of $K$.

To estimate the second integral we proceed as in the proof of Prop.\ \ref{AProp}: observe first that by Prop.
\ref{propsupp}, the support of $\cos(\eps r \sqrt{\Delta})$ is contained in a single compact set for all
$\eps$ and all $r$ in the domain of integration. It therefore suffices to estimate, for each $j\in \N_0$:
\begin{eqnarray*}
&& \lt{\int_{|r|>2c/\sqrt{\eps}}\phi_c(\eps r) \hat F(r) (\cos(\eps r \sqrt{\Delta})\Delta^j w)\, dr} \\
&& \le \|w\|_{2j} \int_{|r|>2c/\sqrt{\eps}} |\hat F(r)|\, dr = O(\eps^m)
\end{eqnarray*}
for each $m$. Summing up it follows that $x$ does not lie in the support of $\iota_T(w)$ in $\gs(M)$.
In the general case where $w$ is not necessarily continuous we can write $w = (1+\Delta)^k v$ for some continuous $v$ 
and some $k\in \N_0$, so the above argument readily carries over.

Conversely, let $x\in\supp(w)$ and suppose that there exists a neighborhood $U$ of $x$ such that
$\iota_T(w)|_U = 0$ in $\gs(M)$. Pick some $\vphi\in \D(U)$ such that $\langle w,\vphi\rangle \not=0$.
Then $|\langle \iota_T(w)_\eps,\vphi\rangle| = O(\eps^m)$ for each $m$ but $\langle \iota_T(w) - w, \vphi\rangle 
\to 0$ by Prop.\ \ref{Bprop}, so we arrive at a contradiction.
\end{proof}

\begin{prop}\label{DProp}
Let $u\in \D(M)$. Then $(\teps u - u)_{\eps\in I} \in \ns(M)$.
\end{prop}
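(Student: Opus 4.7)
The plan is to reduce the statement to $L^2$-estimates on $M$ and then split $\teps u - u$ into a Fourier-side cutoff error and a spectral-side error. Since $u\in\D(M)$ is compactly supported and $\phi_c$ has compact support, Proposition~\ref{propsupp} applied inside the integrand defining $\teps$ shows that $\supp(\teps u)\subseteq B_{2c}(\supp u)$, so $\teps u - u$ is supported in a single compact $K\comp M$ independent of $\eps$. Continuous seminorms on $\smooth(M)$ are dominated by $C^k$-seminorms on compact sets, and by local Sobolev embedding these are in turn controlled by $\|\,\cdot\,\|_s$ for $s>k+n/2$. Since $\teps$ arises from functional calculus of $\sqrt{\Delta}$, it commutes with every $\Delta^j$; expanding $(1+\Delta)^l$ in binomials, matters reduce to showing
\[
\lt{\Delta^j(\teps u - u)} = \lt{\teps(\Delta^j u) - \Delta^j u} = O(\eps^m)
\]
for each $j\in\N_0$ and each $m\in\N$, where $\Delta^j u$ again lies in $\D(M)$.

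I would then decompose $\teps u - u = (\teps u - F(\eps\sqrt{\Delta})u) + (F(\eps\sqrt{\Delta})u - u)$, using that by (\ref{fourier}) one has $F(\eps\sqrt{\Delta})u = \frac{1}{2\pi}\int (F_\eps)^\wedge(s)\cos(s\sqrt{\Delta})u\,ds$. The first piece equals
\[
\frac{1}{2\pi}\int_{-\infty}^\infty (\phi_c(s)-1)(F_\eps)^\wedge(s)\cos(s\sqrt{\Delta})u\,ds.
\]
Estimating in $L^2(M)$ via the unitarity of $\cos(s\sqrt{\Delta})$ and then substituting $r=s/\eps$, the bound becomes $\frac{\lt{u}}{2\pi}\int |\phi_c(\eps r)-1|\,|\hat F(r)|\,dr$. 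Since $\phi_c\equiv 1$ on some $[-\delta,\delta]$, the integrand vanishes on $|r|\le\delta/\eps$, and on the complement $|\phi_c(\eps r)-1|$ is bounded while $\hat F\in\maS(\R)$; thus the tail integral is $O(\eps^m)$ for every $m$. This is precisely the mechanism of Lemma~\ref{philemma}, and it extends verbatim to $\Delta^j$ applied inside the integral.

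For the second piece I would invoke the spectral theorem for $\sqrt{\Delta}$:
\[
\lt{F(\eps\sqrt{\Delta})u - u}^2 = \int_0^\infty |F(\eps\lambda)-1|^2\,d\|E_\lambda u\|^2.
\]
Because $F\in\maS(\R)$ equals $1$ on a neighborhood of $0$, for each $k$ the function $G_k(\lambda):=(F(\lambda)-1)/\lambda^{2k}$, extended by zero at the origin, is continuous and bounded on $[0,\infty)$ (identically zero near $0$ by the cutoff condition, and $O(\lambda^{-2k})$ at infinity by Schwartz decay). Hence $|F(\eps\lambda)-1|\le\|G_k\|_\infty(\eps\lambda)^{2k}$, yielding
\[
\lt{F(\eps\sqrt{\Delta})u - u}\le\|G_k\|_\infty\,\eps^{2k}\,\lt{\Delta^k u},
\]
finite for every $k$ since $u\in\D(M)$. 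Combining the two estimates (and applying the same argument to $\Delta^j u$ in place of $u$) gives $\lt{\Delta^j(\teps u - u)}=O(\eps^m)$ for all $j,m$, from which the Sobolev and seminorm reductions above deliver the result.

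The main technical point I anticipate is twofold: correctly accounting for the fact that $\phi_c-1$ is not compactly supported (handled by the rapid decay of $\hat F$ after rescaling), and verifying the boundedness of the auxiliary functions $G_k$, which relies essentially on $F\equiv 1$ near the origin. Both are mild and play exactly the roles of the ``moment-type'' conditions familiar from the convolution construction in Colombeau's original setting.
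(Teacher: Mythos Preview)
Your argument is correct, and the reduction via compact support plus local Sobolev embedding is exactly how the paper begins. The difference lies in how the core $L^2$ estimate is obtained. The paper does \emph{not} split $\teps u - u$ into two pieces; instead it writes
\[
\Delta^j(\teps u - u)=\frac{1}{2\pi}\int_{-\infty}^{\infty}\hat F(r)\bigl[\phi_c(\eps r)\cos(\eps r\sqrt{\Delta})-1\bigr]\Delta^j u\,dr,
\]
Taylor expands $s\mapsto \phi_c(s)\cos(s\sqrt{\Delta})$ at $s=0$ to order $m$, and then kills the resulting polynomial terms $r^l$ using the vanishing higher moments $\int r^l\hat F(r)\,dr=0$ (which encode $F\equiv 1$ near $0$ on the Fourier side), leaving a remainder that is $O(\eps^m)$ uniformly. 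Your route is more modular: the cutoff error $(\phi_c-1)$ is disposed of by the rapid decay of $\hat F$ outside $|r|\le\delta/\eps$, and the spectral error $F(\eps\sqrt{\Delta})-I$ by a direct functional-calculus bound using $F\equiv 1$ near $0$. This avoids operator-valued Taylor expansions entirely and makes the role of the two hypotheses on $F$ and $\phi_c$ transparent; on the other hand it uses $\phi_c\equiv 1$ near $0$ (implicit in the paper but not strictly needed for the Taylor argument, which only requires $\phi_c(0)=1$ together with evenness). Both methods give the same quantitative conclusion, and your reference to Lemma~\ref{philemma} for the first piece is apt, though note that the paper's proof of that lemma also proceeds by Taylor expansion rather than the pure tail bound you describe.
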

\begin{proof}
By the local Sobolev embedding theorem, it suffices to show that for all $j\in \N_0$
$$
\alpha(\eps,j):=\lt{\Delta^j(\teps u -u)} = O(\eps^m)
$$
for each $m\in \N$. Due to our assumptions on $F$ and $\phi_c$, $\alpha(\eps,j)$ equals
\begin{eqnarray*}
&& \lt{\Delta^j \int_{-\infty}^\infty (F_\eps)^\wedge(t)(\phi_c(t) \cos(t\sqrt{\Delta})u -  u)\,dt } = \\
&& \lt{\int_{-\infty}^\infty {\hat F}(r)(\phi_c(\eps r) \cos(\eps r\sqrt{\Delta})\Delta^j u - \Delta^j u)\,dr}
\end{eqnarray*}
By Taylor expansion, for any $m\in \N$ there exists some $C_m$ such that 
$$
\phi_c(\eps r) \cos(\eps r\sqrt{\Delta})\Delta^j u = \Delta^j u + \sum_{l=1}^{m-1} \frac{\eps^l r^l}{l!} a_l \Delta^{j+l/2} u
+R_m(r,\eps) \Delta^{j+m/2} u
$$
where $a_j\in \R$ and $R_m$ is globally bounded by $C_m \eps^m$. 

Since all higher moments  of $\hat F$ vanish, 
\begin{eqnarray*}
\alpha(\eps,j) &\le& \lt{\int_{-\infty}^\infty \hat F(r) R_m(r,\eps) \Delta^{j+m/2} u \,dr} \\ 
&\le& C_m \|\hat F\|_{L^1(\R)}
\lt{\Delta^{j+m/2} u} \eps^m ,
\end{eqnarray*}
as claimed.
\end{proof}
The following important invariance properties of the embedding $\iota_T$ follow immediately
from (\ref{tepsdef}).

\begin{prop} \label{isometries} \ 
\begin{itemize} 
\item[(i)] Let $f:M\to M$ be an isometry. Then for any $u\in \D'(M)$, $\iota_T(f^*u) = f^*\iota_T(u)$.
\item[(ii)] If $\Psi$ is a pseudodifferential operator commuting with $\Delta$, then $\Psi$
commutes with $\iota_T$.
\end{itemize}
\end{prop}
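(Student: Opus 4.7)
The plan is to reduce both statements to the corresponding invariance of the wave propagator $\cos(s\sqrt{\Delta})$, which is a straightforward consequence of the spectral theorem. Since $T_\eps(\sqrt{\Delta})$ is defined by the Bochner integral (\ref{tepsdef}) against $\cos(s\sqrt{\Delta})$, any operator commuting with all the $\cos(s\sqrt{\Delta})$ automatically commutes with $T_\eps(\sqrt{\Delta})$.

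For (i), observe first that an isometry $f:M\to M$ preserves the Riemannian volume form, so $f^*$ is unitary on $L^2(M)$, and since $f$ preserves $g$ it intertwines $\Delta$: $f^*\Delta=\Delta f^*$ on $\D(M)$, and hence on the common core that determines the self-adjoint extension. By the spectral theorem, $f^*$ then commutes with every bounded Borel function of $\Delta$; in particular $f^*\cos(s\sqrt{\Delta})=\cos(s\sqrt{\Delta})f^*$ for every $s\in\R$. Substituting into (\ref{tepsdef}) yields $f^*\circ T_\eps(\sqrt{\Delta})=T_\eps(\sqrt{\Delta})\circ f^*$ on $L^2(M)$, and by the standard duality extension $T_\eps$ (as a smoothing operator) to $\D'(M)$, also on $\maE'(M)$. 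Passing to classes in $\gs(M)$ gives $\iota_T(f^*u)=f^*\iota_T(u)$ for every $u\in\maE'(M)$. Since $\iota_T$ extends to $\D'(M)$ as a sheaf morphism (as recalled after Def.\ \ref{abstract_embedding}), and since the diffeomorphism $f$ also induces a sheaf morphism on $\D'(M)$, the two sheaf morphisms $\iota_T\circ f^*$ and $f^*\circ \iota_T$ that already coincide on $\maE'$-sections must coincide on all of $\D'(M)$.

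For (ii), if a pseudodifferential operator $\Psi$ commutes with $\Delta$, the same spectral-calculus argument applies: commutation with $\Delta$ on a suitable common core (e.g.\ the Schwartz-type space $D^\infty=\bigcap_k\Dom(\Delta^k)$) passes, by the spectral theorem for the essentially self-adjoint operator $\Delta$, to commutation with every bounded Borel function of $\Delta$, hence with $\cos(s\sqrt{\Delta})$ for all $s$. Integrating against $\phi_c(s)(F_\eps)^\wedge(s)$ gives $\Psi\circ T_\eps(\sqrt{\Delta})=T_\eps(\sqrt{\Delta})\circ \Psi$, from which $\Psi(\iota_T(u))=\iota_T(\Psi u)$ on $\maE'(M)$. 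Assuming $\Psi$ is properly supported (as is customary when one considers its action on $\D'(M)$ and $\gs(M)$), the identity extends from $\maE'(M)$ to $\D'(M)$ by the same sheaf-theoretic argument as in (i).

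The only delicate point is the passage from $[\Psi,\Delta]=0$ (resp.\ $[f^*,\Delta]=0$) to commutation with the functional calculus of $\sqrt{\Delta}$. For $f^*$ this is immediate because $f^*$ is unitary on $L^2(M)$, so the spectral theorem applies directly. For a general (possibly unbounded) $\Psi$ one argues on a common invariant dense domain such as $D^\infty$, where both sides of (\ref{tepsdef}) make sense and are continuous, and extends by density; there are no further analytic subtleties since $T_\eps(\sqrt{\Delta})$ maps $\maE'(M)$ into $H^\infty_{\mathrm{cp}}(M)$ by Prop.\ \ref{Gsmooth} and Prop.\ \ref{AProp}.
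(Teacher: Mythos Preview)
Your proof is correct and follows the same approach as the paper, which simply remarks that both invariance properties ``follow immediately from (\ref{tepsdef}).'' Your version spells out the spectral-theoretic reason why commutation with $\Delta$ passes to commutation with $\cos(s\sqrt{\Delta})$ and hence with $T_\eps(\sqrt{\Delta})$, and adds the sheaf-theoretic extension from $\maE'(M)$ to $\D'(M)$; this is more detail than the paper supplies, but the underlying idea is identical.
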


Turning now to the singularity structure of distributions and their embedded images, we will show that
the embedding $\iota_T$ preserves the wavefront set of distributions, i.e., that
(\ref{singularity}) from Def.\ \ref{abstract_embedding} is satisfied. 
\begin{theo}\label{wfth}
Let $w\in \D'(M)$. Then $\WF(w) = \WFg(\iota_T(w))$.
\end{theo}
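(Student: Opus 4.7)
The plan is to establish the two inclusions $\WFg(\iota_T(w)) \subseteq \WF(w)$ and $\WF(w) \subseteq \WFg(\iota_T(w))$ separately. Since both are local statements at a point $(x_0,\xi_0)\in T^*M\setminus 0$, I would work in a coordinate chart around $x_0$, using the sheaf property of $\gs$ and the finite-propagation bound of Prop.~\ref{propsupp} to control all supports throughout.

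For the inclusion $\WFg(\iota_T(w)) \subseteq \WF(w)$, assume $(x_0,\xi_0)\notin \WF(w)$. I would choose $\psi\in \D(M)$ with $\psi\equiv 1$ on a neighborhood $V$ of $x_0$ such that $\widehat{\psi w}$ is rapidly decreasing on an open conic neighborhood $\Gamma$ of $\xi_0$, together with an auxiliary cutoff $\tilde\varphi\in \D(M)$ with $\tilde\varphi(x_0)=1$ and $B_{2c}(\supp\tilde\varphi)\subset V$. Since $\phi_c$ is supported in $[-2c,2c]$, Prop.~\ref{propsupp} gives $\tilde\varphi T_\eps((1-\psi)w)=0$, so that $\tilde\varphi T_\eps w = \tilde\varphi T_\eps(\psi w)$. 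The task is then to bound
\[
\widehat{\tilde\varphi T_\eps(\psi w)}(\xi) = \langle \psi w,\, T_\eps(\tilde\varphi e^{-i\xi\cdot})\rangle
\]
for $\xi$ in a slightly smaller cone $\Gamma'\subset\Gamma$. Using the representation \eqref{tepsdef} after the substitution $s=\eps t$, together with the FIO/WKB structure of $\cos(s\sqrt\Delta)$, one splits $T_\eps(\tilde\varphi e^{-i\xi\cdot})$ into a principal part whose microlocal content is concentrated in a small cone around $\xi_0$ and a remainder rapidly decaying in $\xi$; pairing against $\psi w$ and invoking the rapid decay of $\widehat{\psi w}$ on $\Gamma$ then yields $(1+|\xi|)^\ell |\widehat{\tilde\varphi T_\eps w}(\xi)|=O(\eps^{-N})$ on $\Gamma'$ for every $\ell$, with $N$ arising from the moderateness established in Prop.~\ref{AProp}.

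For the inclusion $\WF(w) \subseteq \WFg(\iota_T(w))$, assume $(x_0,\xi_0)\notin \WFg(\iota_T(w))$, so there exist $\varphi\in\D(M)$ with $\varphi(x_0)=1$, an open conic $\Gamma\ni\xi_0$, and $N\in\N$ with
\[
\sup_{\xi\in\Gamma}(1+|\xi|)^\ell |\widehat{\varphi T_\eps w}(\xi)| = O(\eps^{-N})\qquad (\ell\in\N).
\]
To deduce rapid decay of $\widehat{\varphi w}$, I would use a diagonal argument with $\eps=\eps(\xi)=|\xi|^{-\alpha}$ for a suitable $\alpha>0$: this gives $|\widehat{\varphi T_{\eps(\xi)}w}(\xi)|\le C_\ell|\xi|^{\alpha N-\ell}$, which is rapidly decreasing in $\xi$ once $\ell$ is chosen large. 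To transfer this bound to $\widehat{\varphi w}$, I would combine it with a quantitative convergence estimate $|\widehat{\varphi(T_\eps w-w)}(\xi)| \le C\eps^p(1+|\xi|)^k$, with $p>0$ and $k$ depending on the order of $w$. This is obtained by writing $T_\eps=G_\eps(\sqrt\Delta)$, observing that $G_\eps(\lambda)-1$ vanishes for $\lambda$ in a neighborhood of $0$ of size $\sim\eps^{-1}$ (since $F\equiv 1$ near the origin), extracting the Sobolev bound $\|\varphi(T_\eps w-w)\|_{H^{-M}}=O(\eps^{M})$ from $\sup_\lambda(1+\lambda^2)^{-M/2}|1-G_\eps(\lambda)|=O(\eps^{M})$, and converting it into a pointwise Fourier bound via the compact support and bounded order of $\varphi(T_\eps w-w)$. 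Choosing $\alpha>(k+m)/p$ then makes the difference decay like $|\xi|^{-m}$; combining the two estimates gives the desired rapid decay of $\widehat{\varphi w}$ on a conic neighborhood of $\xi_0$.

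The hardest step will be the first inclusion, specifically the uniform-in-$(\eps,\xi)$ microlocal analysis of $T_\eps(\tilde\varphi e^{-i\xi\cdot})$, which is essentially a stationary-phase/semiclassical analysis of the wave propagator $\cos(s\sqrt\Delta)$ for small $s$. This is where the careful choices in the paper (even $F$ identically $1$ near zero, symmetric cutoff $\phi_c$, the finite-propagation constant $2c$) become essential, as they allow one to reconcile rapid polynomial decay in $|\xi|$ with at most polynomial growth in $\eps^{-1}$.
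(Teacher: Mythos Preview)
Your overall strategy is sound and both inclusions would go through, but your route diverges from the paper's in instructive ways.

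For the inclusion $\WF(w)\subseteq\WFg(\iota_T(w))$ (your second), the paper also uses a diagonal choice $\eps=\eps(\xi)$, but with a simplification you do not make: by ellipticity of $\Delta$ one may replace $w$ by $(1+\Delta)^{-k}w\in H^2$ without changing either wavefront set (distributional or generalized, using Prop.~\ref{isometries}(ii) and \cite{GH}). Once $w\in H^2$, the convergence estimate collapses to the elementary $L^1$ bound $\|\varphi(T_\eps w-w)\|_{L^1}\le C\eps$, obtained directly from $\|\cos(\eps r\sqrt\Delta)w-w\|_{L^2}\le C\eps|r|$ via the spectral theorem. This bypasses your route through $\sup_\lambda(1+\lambda^2)^{-M/2}|1-G_\eps(\lambda)|$ and the conversion of negative Sobolev norms into pointwise Fourier bounds; your version works too, but note that $G_\eps(\lambda)-1$ does not literally vanish for $|\lambda|\lesssim\eps^{-1}$---because of the cutoff $\phi_c$ one only has $G_\eps(\lambda)=F(\eps\lambda)+O(\eps^\infty)$, which is still enough.

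For the inclusion $\WFg(\iota_T(w))\subseteq\WF(w)$ (your first), the paper does \emph{not} carry out a stationary-phase/WKB analysis of $T_\eps(\tilde\varphi\, e^{-i\xi\cdot})$. Instead it invokes propagation of singularities for the wave equation: with $u(s,x):=\cos(s\sqrt\Delta)w$, the bicharacteristic description of $\WF(u)$ from \cite{Du} gives that $(U\times\Gamma_1)\cap\WF(u(s,\cdot))=\emptyset$ for all small $|s|$ whenever $(U\times\Gamma_1)\cap\WF(w)=\emptyset$. One then takes $\chi\in\D(U)$, $\nu\in\D((-s_0,s_0))$, obtains rapid decay of $((\nu\otimes\chi)u)^\wedge(\tau,\xi)$ on $\R\times\Gamma_1$, and integrates out $\tau$ to get $|(\chi\cdot\iota_T(w)_\eps)^\wedge(\xi)|=O((1+|\xi|)^{-l})$ \emph{with no $\eps$-loss} (so $N=0$ in \eqref{wfest}). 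Your FIO approach would ultimately establish the same thing---it amounts to re-deriving the propagation result with uniform-in-$\eps$ bounds---but the paper's argument is much shorter because it cites the black-box theorem and only checks a continuity property of the bicharacteristic flow. Your identification of this direction as ``the hardest step'' is thus an artifact of the method you chose; with the propagation-of-singularities shortcut it becomes the more routine half.
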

\begin{proof}
We first note that the notion of wavefront set (both distributional and generalized) is local in nature. 
Moreover, by finite propagation speed of the solution 
operator $\cos(s\sqrt{\Delta})$ (Prop.\ \ref{propsupp}), we may choose the cutoff function $\phi_c$
in such a way that given some local chart $(\psi,U)$ and any $w\in \maE'(U)$, each $\teps w$ is supported in
$U$ as well (this particular choice of $\phi_c$ does not affect $\iota_T$ by Lemma \ref{philemma}). 
By unique solvability of the wave equation (\ref{wave})--(\ref{waveic2}) we may therefore 
use (\ref{tepsdef}) on $\psi(U)$ (with metric $\psi_* g$), thereby effectively transferring
the problem to $\R^n$.

Suppose first that $(x_0,\xi_0)\in (T^*M\setminus \{0\})\setminus \WFg(w)$. By \eqref{wfest} this means there exists
some conic neighborhood $\Gamma$ of $\xi_0$ in $T^*M\setminus \{0\}$, some $N\in \N_0$ and some $\vphi \in
\D(\R^n)$ with $\varphi(x_0)=1$ such that for all $l\in \N_0$ and all $\xi\in \Gamma$,
\begin{equation} \label{nogwf}
|(\vphi(\teps w))^\wedge(\xi)|(1+|\xi|)^l = O(\eps^{-N}).
\end{equation}
For a suitable $k\in \N_0$ we may write $w=(1+\Delta)^k u$, with $u\in H^2(\R^n)$. 
Since $\Delta$ is elliptic, $\WF(w) = \WF(u)$ and $\WF_g(\iota_T(w)) = \WFg(\iota_T(u))$ (by \cite{GH}, Th.\ 4.1
and Prop.\ \ref{isometries} (ii)), so
we may without loss of generality assume that $w\in H^2(\R^n)$.
We have to estimate
\begin{eqnarray}
|(\vphi w)^\wedge(\xi)| &\le& 
|[(\teps w - w)\varphi]^\wedge(\xi)| + |(\vphi \teps w)^\wedge(\xi)| \nonumber \\ 
&\le& 
\|\vphi(\teps w - w)\|_{L^1} + |(\vphi \teps w)^\wedge(\xi)|. \label{phiw}
\end{eqnarray}
Now
\begin{eqnarray*}
&& \int |\vphi(x)(\teps w(x) - w(x))|\,dx \\
&&\le \int \int |\vphi(x)| |\phi_c(\eps r)\cos(\eps r \sqrt{\Delta})w(x) -w(x)|\,dx
|\hat F(r)|\,dr \\
&& \le C(\vphi) \int \|\phi_c(\eps r)\cos(\eps r \sqrt{\Delta})w -w\|_{L^2} |\hat F(r)|\, dr
\end{eqnarray*}
We use functional calculus to bound this term (cf., e.g., \cite{RS}, Th.\ VIII.4).
Let $U: L^2(\R^n,g) \to L^2(\Omega,d\mu)$ be a unitary
isomorphism transforming $\Delta$ into the multiplication operator $M_f: h \mapsto f h$ (for some fixed $f\in L^2(\Omega,d\mu)$).
Then setting $\alpha(\eps,r):= \|\phi_c(\eps r)\cos(\eps r \sqrt{\Delta})w -w\|_{L^2}$
and $k(\Delta):=\phi_c(\eps r)\cos(\eps r \sqrt{\Delta}) - I$ we find
\begin{eqnarray*}
\alpha(\eps,r)^2 &=& \|U(k(\Delta)w)\|_{L^2(\Omega)}^2 = \|M_{k\circ f}Uw\|_{L^2(\Omega)}^2 \\
&=& \int |k\circ f(\omega)|^2 |(Uw)(\omega)|^2 \, d\mu(\omega) \\
\end{eqnarray*}
To estimate this term we note that $k(0)=\phi_c(\eps r) - 1$ and
$$
|k'(x)| = \eps r \left|\frac{\phi_c(\eps r)}{2}\right| \left| 
\frac{\sin(\eps r \sqrt{x})}{\eps r \sqrt{x}}\right| \le C\eps r
$$
Hence $|k(x)|\le C \eps r (|x|+1)$ and we obtain
$$
\alpha(\eps,r)^2\le C \eps^2 r^2 \int (f(\omega)^2 + 1) |(Uw)(\omega)|^2 \, d\mu(\omega)
$$

Since $w\in \Dom(\Delta)$, $f \cdot (Uw)\in L^2(\Omega,d\mu)$, hence the integral in this last expression is finite.
It follows that $\alpha(\eps,r)\le C \eps |r|$. 
From (\ref{phiw}) and these calculations we therefore conclude that for some $C=C(\vphi,F)$,
\begin{equation} \label{phiwest}
|(\vphi w)^\wedge(\xi)| \le C\eps + |(\vphi \teps w)^\wedge(\xi)|.
\end{equation}
We now show that for any $m\in \N_0$, $|\xi|^{\frac{2m}{N+1}}|(\vphi w)^\wedge(\xi)|$ is bounded on $\Gamma$,
thereby demonstrating that $(x_0,\xi_0)\not\in \WF(w)$. 

Suppose to the contrary that there exists some $m\in \N_0$ and a sequence $\xi_j\in \Gamma$ with $|\xi_j|\to \infty$ 
such that $|\xi_j|^{\frac{2m}{N+1}}|(\vphi w)^\wedge(\xi_j)| \to \infty$ as $j\to \infty$. Then 
$\eps_j := |\xi_j|^{-\frac{2m}{N+1}} \to 0$, and using (\ref{phiwest}), we obtain
\begin{eqnarray*}
|\xi_j|^{\frac{2m}{N+1}}|(\vphi w)^\wedge(\xi_j)|&=&\eps_j^N |\xi_j|^{2m} |(\vphi w)^\wedge(\xi_j)| \\
&\le& C\eps_j^{N+1} |\xi_j|^{2m} + 
\eps_j^N |\xi_j|^{2m}|(\vphi \teps w)^\wedge(\xi)|
\end{eqnarray*}
By (\ref{nogwf}), however, the right hand side of this inequality is globally bounded, a contradiction.

Conversely, suppose that $(x_0,\xi_0)\not\in \WF(w)$. We have to show that $(x_0,\xi_0)\not\in \WFg(\iota_T(w))$. As 
above, we may without loss of generality suppose that $w\in L^2(M)$. Pick some open neighborhood
$U$ of $x_0$ and some conic neighborhood $\Gamma_1$ of $\xi_0$ in $\R^n\setminus 0$ such that 
$(U\times\Gamma_1)\cap \WF(w)=\emptyset$. Let us suppose for the moment that we already know that,
setting $u(s,x):=\cos(s\sqrt{\Delta})w$, we have  
\begin{equation}\label{wflemma}
\exists\, s_0>0: (U\times \Gamma_1)\cap \{(x,\xi) \mid \exists s,\ |s|\le s_0\ \exists \tau: (s,x;\tau,\xi)\in 
\WF(u)\} =\emptyset.
\end{equation}
Then, given $\chi\in \D(U)$ and $\nu\in \D((-s_0,s_0))$, for each $l\in \N$ there exists some $C_l>0$ such that
$$
|((\nu\otimes \chi)u)^{\wedge}(\tau,\xi)| \le C_l(1+|\tau|+|\xi|)^{-l} \quad (\tau\in \R,\ \xi\in \Gamma_1).
$$
Thus for $l>n$ and $|s|\le s_0$ we obtain
\begin{eqnarray*}
|\nu(s)(\chi\cdot u(s,\,.\,))^\wedge(\xi)| &=& |{\mathcal F}_{\tau \to s}^{-1}(((\nu\otimes \chi)u)^{\wedge}(\xi,\tau))|\\
&\le& \int_\R \frac{C_l\,d\tau}{(1+|\tau|+|\xi|)^l} = O((1+|\xi|)^{-l})
\end{eqnarray*}
In addition, we now choose $c$ such that $2c<s_0$ (which is possible by Lemma \ref{philemma}) and 
$\nu\in \D((-s_0,s_0))$ such that $\nu\equiv 1$ on $\supp \phi_c$. Then for $\xi\in \Gamma_1$,
\begin{eqnarray*}
|(\chi\cdot \iota_T(w)_\eps)^\wedge(\xi)| &=& |\int_\R \phi_c(s) (F_\eps)^\wedge(s)\nu(s)
\int_{\R^n} e^{-i\xi x} \chi(x) u(s,x)\,dx\, ds|\\
&=& O((1+|\xi|)^{-l}).
\end{eqnarray*}
Thus, $(x_0,\xi_0)\not\in \WFg(\iota_T(w))$, as claimed.

It remains to establish (\ref{wflemma}). To this end, denote by $\beta$ the bicharacteristic flow on
$T^*(\R\times M)$ corresponding to $\partial_s^2 + \Delta$. Since $u$ is the solution to (\ref{wave_eqn})--(\ref{waveic2})
with $u_0=w$, by \cite{Du}, p 118, $\WF(u) \subseteq C_0\circ \WF(w)$, where 
\begin{eqnarray*}
&&C_0 = \{((s,x;\tau,\xi),(x_0,\xi_0))\mid \exists r,\, \tau_0\in\R:(s,x;\tau,\xi) = \beta(r, (0,x_0,\tau_0,\xi_0)) \\
&& \hphantom{C_0 = \{((s,x;\tau,\xi),(x_0,\xi_0))\mid} \wedge \  -\tau_0^2 + g_{x_0}(\xi_0,\xi_0) = 0\}.
\end{eqnarray*}
$\beta$ is the flow of the Hamiltonian vector field of the symbol $-\tau^2+g_x(\xi,\xi)$, so the
corresponding system of ODEs reads
$$
\begin{array}{rcl}
\dot s (r) &=& -2\tau(r)\\
\dot x(r) &=& 2 g_{x(r)}(\xi(r),\,.\,)\\
\dot \tau(r) &=& 0\\
\dot \xi(r) &=& -Dg(x(r))(\xi(r),\xi(r))
\end{array}
$$ 
Denoting by $\beta_i$ the $i$-th component of $\beta$, it follows that
\begin{eqnarray*}
&&C_0 = \{((s,x;\tau,\xi),(x_0,\xi_0))\mid \tau\equiv \tau_0 = \pm \sqrt{g_{x_0}(\xi_0,\xi_0)},\, s=-2r\tau_0,
\exists r\in\R: \\
&& \hphantom{C_0 = \{((s,x;\tau,\xi),(x_0,\xi_0))\mid} 
(x,\xi) = (\beta_2,\beta_4)(r, (0,x_0,\tau_0,\xi_0))\},
\end{eqnarray*}
and, since $\WF(u) \subseteq C_0\circ \WF(w)$,
\begin{eqnarray*}
&&\WF(u) \subseteq \{(s,x;\tau_0,\xi) \mid  
\tau_0 = \pm \sqrt{g_{x_0}(\xi_0,\xi_0)},\, s=-2r\tau_0,
\exists r\in\R: \\
&& \hphantom{\WF(u) \subseteq \{ }
\exists (\bar x,\bar\xi)\in \WF(w)\, \exists r\in \R: (x,\xi) = (\beta_2,\beta_4)(r, (0,\bar x,\tau_0,\bar\xi)) \}.
\end{eqnarray*}
By continuity of $\beta$ and the fact that $(U\times\Gamma_1)\cap \WF(w)=\emptyset$, (\ref{wflemma}) follows.
\end{proof}

Summing up, we obtain
\begin{theo} \label{mainth}
The family $(\teps)_{\eps\in I}$ defined by (\ref{tepsdef}) is an optimal regularization process. The corresponding
embedding
\begin{eqnarray*}
\iota_T: \D'(M) &\to& \gs(M) \\
\iota_T(u) &=& [(\teps u)]
\end{eqnarray*}
is an injective sheaf morphism that renders $\cinfty(M)$ a subalgebra of $\gs(M)$. $\iota_T$ commutes
with isometries and pseudodifferential operators that commute with $\Delta$. Moreover, it preserves 
the singularity structure (wavefront set) of distributions.
\end{theo}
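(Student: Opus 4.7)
The plan is to assemble this theorem as a bookkeeping exercise, collecting the work already done in Propositions \ref{AProp}, \ref{Bprop}, the support-preservation proposition, Proposition \ref{DProp}, Proposition \ref{isometries}, and Theorem \ref{wfth}. First I would verify the five conditions (\ref{moderate})--(\ref{singularity}) of Definition \ref{abstract_embedding} for the family $(\teps)_{\eps\in I}$ defined in \eqref{tepsdef}: condition (\ref{moderate}) is Proposition \ref{AProp}; condition (\ref{Identity}) is Proposition \ref{Bprop}; condition (\ref{local}) is the support-preservation proposition; condition (\ref{negligible}) is Proposition \ref{DProp}; and condition (\ref{singularity}) is Theorem \ref{wfth}. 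Since Proposition \ref{Gsmooth}(ii) has already established that each $\teps$ is a properly supported smoothing operator, the family qualifies as an optimal regularization process in the sense of Definition \ref{abstract_embedding}.

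Next I would argue that $\iota_T$ extends from $\maE'(M)$ to $\D'(M)$ as a sheaf morphism. The key point here is that preservation of supports (\ref{local}) allows one to glue the embedding coherently. Following the discussion in the paragraph after Definition \ref{abstract_embedding}, one defines $\iota_T$ on an arbitrary $w\in \D'(M)$ by localizing: cover $M$ by open sets $U_\alpha$ with compact closures, multiply $w$ by a suitable partition of unity to obtain compactly supported pieces, embed each piece via $\iota_T$, and glue using the fact that two such choices yield classes in $\gs$ that agree on overlaps (again by support preservation, combined with the sheaf structure of $\gs$). The details for the sheaf extension are purely formal and are pointed out to be carried over from \cite{DD, GKOS}, so I would just invoke those references.

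For injectivity, suppose $\iota_T(w) = 0$ in $\gs(M)$ for some $w\in\D'(M)$. Testing against any $\vphi\in\D(M)$ and using the approximate identity property (\ref{Identity}), $\langle\teps w, \vphi\rangle \to \langle w,\vphi\rangle$, while $\langle\teps w,\vphi\rangle = O(\eps^m)$ for all $m$ because $[(\teps w)]$ is negligible on any relatively compact open set meeting $\supp(\vphi)$. This forces $w=0$. The fact that $\cinfty(M)$ embeds as a faithful subalgebra is immediate from condition (\ref{negligible}), since for $u\in\cinfty(M)$ the representative $(\teps u)$ differs from the constant net $(u)$ by a negligible net on every compact subset, again using sheaf-theoretic extension from the compactly supported case. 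Commutation with isometries and with pseudodifferential operators that commute with $\Delta$ is Proposition \ref{isometries}, and preservation of wavefront sets is Theorem \ref{wfth}, both of which apply directly.

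The one subtlety worth flagging as the main obstacle is already hidden in the sheaf extension step: one needs to check that the localized definition of $\iota_T$ does not depend on the choice of chart, partition of unity, or cutoff $\phi_c$. Independence of $\phi_c$ is Lemma \ref{philemma}. Independence of the other choices is a matter of verifying that two embeddings which differ by a properly supported smoothing operator depending mildly on $\eps$ yield negligible differences on compact sets; this reduces to estimates of the type carried out in Proposition \ref{AProp} combined with the moment conditions built into $\hat F$ via $F\equiv 1$ near $0$. Modulo this routine (but technical) sheaf-theoretic verification, every other assertion of the theorem has been established.
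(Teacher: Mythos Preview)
Your proposal is correct and matches the paper's approach exactly: the paper presents this theorem with the preamble ``Summing up, we obtain'' and gives no separate proof, treating it as a direct consequence of Propositions \ref{Gsmooth}(ii), \ref{AProp}, \ref{Bprop}, the support-preservation proposition, \ref{DProp}, \ref{isometries}, Theorem \ref{wfth}, and the sheaf-extension remarks following Definition \ref{abstract_embedding}. Your write-up is in fact more explicit than the paper's, but the underlying logic is identical.
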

\begin{remark} We note that while Th.\ \ref{mainth} is formulated using the language of
algebras of generalized functions, it can also be used independently of this theory. For example,
on the level of regularizing nets, preservation of wavefront sets under $\iota_T$ means that
the wavefront set of a distribution $w\in \D'(M)$ can be read off from the asymptotic
properties of its regularization $(\teps(w))$ via (\ref{wfest}), and similar for the other
properties.
\end{remark}

\section{Distributional sections of a vector bundle} \label{vectorsec}
In this section we consider the problem of regularizing distributional sections of a vector bundle over a manifold.  We shall  provide a notion of optimal regularization and show that given a differential operator $D$ satisfying two simple conditions, we can always obtain such regularizations.

Let $|\Lambda| M$ denote the density bundle over $M$.  Then for $E^*$ the dual vector bundle  of some vector bundle $E$,
the space of distributional sections of $E$ is given by $\D'(M:E):=\Gamma_c^{\infty}(M:E^*\otimes |\Lambda| M)'$. In particular we have a natural inclusion $\Gamma^{\infty}(M:E)\rightarrow \D'(M:E)$. By  choosing a trivialization of the density bundle $|\Lambda| M$, for example by choosing a Riemannian metric, and by choosing a Hermitian inner product on $E$ to identify with $E^*$ we can  (non-canonically)  identify $\D'(M:E)$  with $\Gamma_c^{\infty}(M:E)'$.  In the sequel we shall assume that we are given a Riemannian metric on $M$ and a Hermitian inner product on $E$.  We similarly define the space $\mathcal{E}'(M:E)$ of compactly supported $E$-valued distributions.

By a smoothing operator on $E$ we shall mean an operator defined by a kernel in $\Gamma^{\infty}(M\times M:~\operatorname{End}(E)\otimes  \Lambda_R)$\footnote{To be precise, let $\pi_L,\pi_R:M\times M\rightarrow M $ be the left and right projections on $M$.  Then  $\operatorname{End(E)}:=\pi_R^*(E)^*\otimes \pi_L^*(E)$ and  $\Lambda_R=\pi_R^*|\Lambda|M$.}. Then if $T$ is a smoothing operator then $T:\mathcal{E}'(M:E)\rightarrow \Gamma^{\infty}(M:E)$.

Having fixed our notations we shall define optimal regularization processes
for distributional sections analogous to Def.\ \ref{abstract_embedding}.

 \begin{definition}\label{bundle_embedding}
   A parametrized family $(T_{\varepsilon})_{\eps\in I}$ of properly supported smoothing operators is
 called an optimal regularization process if
 \begin{enumerate}[(A)]
 \item \label{moderatevb}
 The regularization of any compactly supported distributional section $s\in\mathcal{E}'(M:E)$ is of moderate growth:
For any continuous seminorm $\rho$ on $\Gamma^{\infty}(M:E)$, there exists some integer $N$ such that
   \[
\rho(T_{\varepsilon}s) = O(\varepsilon^{N}) \qquad (\eps\to 0),
\]
\item\label{Identityvb} The net $(T_{\varepsilon})$ is an approximate identity: for each
$s\in \maE'(M:E)$, 

\[\underset{\varepsilon \rightarrow 0}{\operatorname{lim}}T_{\varepsilon}s=s\quad \textrm{in}\,\,\maD'(M:E).\]

\item\label{negligiblevb} If $u\in\Gamma_c^{\infty}(M:E)$ is a smooth compactly supported section of $E$ then
for all continuous seminorms $\rho$ and given any integer $m$,
\[
\rho(T_{\varepsilon}u-u) = O(\varepsilon^m).
\]

\item\label{singularityvb} 
The  induced map $\iota_T: {\mathcal E}'(M:E)\rightarrow  \maG(M:E)$  preserves  support, singular support and the wavefront set. In particular, 
\[\iota_T(\D'(M:E))\cap \maG^{\infty}(M:E)= \Gamma^{\infty}(M:E).\]

\end{enumerate}
\end{definition}

In the following section we shall  describe the precise requirements on a differential operator $D$ that would provide us with the functional calculus necessary for the construction of  an optimal regularization.
\subsection{Admissible operators}
Let $E\rightarrow M$ be a vector bundle over a complete Riemannian manifold $M$ provided with a Hermitian inner-product $\ip{~~}_E$. We shall denote by $L^2(M:E)$ the completion of the compactly supported sections $\Gamma_c^{\infty}(M:E)$ with respect  to the norm
\[\|s\|:=\int_M \ip{s(x),s(x)}_Edx\quad s\in\Gamma_c^{\infty}(M:E).\]
\begin{definition}
Let $D$ be a symmetric first order differential operator on $E$.  We shall assume that
\begin{enumerate}

\item The operator $D$ has finite speed of propagation, that is the norm of the principal symbol over the unit sphere is bounded by a constant $C_D$,
\[C_D=\operatorname{sup} \{\|\sigma_D(x,\xi)\|~|~ x\in M,\ \|\xi\|=1\}<\infty.\]
\item  The operator $D$ is elliptic.
\end{enumerate}
Such a differential operator $D$ shall be called admissible operator.
\end{definition}

 As a consequence of the finite speed of propagation, $D$ is essentially self-adjoint.  Therefore the equation
 \begin{eqnarray}\label{transport_eqn}
 \frac{\partial}{\partial t}u=iDu\qquad u(\,.\,,0)=u_0,
 \end{eqnarray}
 has a unique solution for all times $t$ for any initial datum $u_0\in\Gamma_c^{\infty}(M:E)$. Uniqueness follows from energy estimates, while existence is seen by applying functional calculus to note that $e^{itD}u_0$ is a solution.
 
 Furthermore  for a Schwartz function $F\in\Sch(\RR)$ the Fourier inversion formula  gives that
 \begin{eqnarray}\label{FIF}
  F(D)=\frac{1}{2\pi}\int_{-\infty}^{\infty}\hat{F}(s)e^{isD}ds.
 \end{eqnarray}
(with respect to the strong operator topology).
 
 As already noted, if $u$ is supported in a set $L$  then  $e^{itD}u$ is supported in the ball $B_{C_D\cdot t}(L)$. This has the following consequence:
 
\begin{lem}
If $F\in \Sch(\RR)$ is a Schwartz function  such that $\hat{F}$ is supported in an interval $(-c,c)$ then, for any $u\in L^2(M:E)$,
\[\supp( F(D)u)\subseteq B_{C_D\cdot c}(\supp(u)).\] 
\end{lem}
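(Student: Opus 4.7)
The plan is to combine the Fourier inversion representation \eqref{FIF} with the finite propagation speed property that has just been recalled. Since $\hat F$ is supported in $(-c,c)$, the Bochner integral expression for $F(D)u$ reduces to
\[
F(D)u \;=\; \frac{1}{2\pi}\int_{-c}^{c}\hat F(s)\,e^{isD}u\,ds,
\]
and convergence of this integral in $L^2(M:E)$ is guaranteed by $\hat F\in L^1(\R)$ together with unitarity of $e^{isD}$, exactly as in the Bochner integral discussion after \eqref{fourier}. By the preceding observation (the analogue of Prop.\ \ref{propsupp} for $D$), for each $s$ in the integration range we have $\supp(e^{isD}u)\subseteq B_{C_D|s|}(\supp(u))\subseteq B_{C_D c}(\supp(u))$.

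To turn this pointwise (in $s$) support information into a support statement for the integral, I would test against an arbitrary smooth compactly supported section $\varphi$ of $E^*\otimes|\Lambda|M$ whose support is disjoint from the closed set $B_{C_Dc}(\supp(u))$. By continuity of the pairing with $\varphi$ (which is a continuous linear functional on $L^2(M:E)$), one may pull the pairing inside the Bochner integral:
\[
\langle F(D)u,\varphi\rangle \;=\; \frac{1}{2\pi}\int_{-c}^{c}\hat F(s)\,\langle e^{isD}u,\varphi\rangle\,ds.
\]
For each $s\in(-c,c)$, $\supp(e^{isD}u)\subseteq B_{C_D|s|}(\supp(u))$ is strictly inside $B_{C_D c}(\supp(u))$, hence disjoint from $\supp\varphi$, so $\langle e^{isD}u,\varphi\rangle=0$. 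The integral then vanishes identically, so $\langle F(D)u,\varphi\rangle=0$. Since $\varphi$ was arbitrary with support outside $B_{C_Dc}(\supp(u))$, this gives the desired inclusion.

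The only real subtlety is the justification of the commutation of the pairing with the Bochner integral; this is standard but is the step one must not gloss over, since it is precisely what converts the pointwise-in-$s$ support bound into a support bound for $F(D)u$. The openness of the interval $(-c,c)$ on which $\hat F$ is supported ensures that each relevant $|s|$ satisfies $C_D|s|\le C_Dc$, so no boundary issue arises.
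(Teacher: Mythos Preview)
Your argument is correct and is precisely the approach the paper has in mind: the lemma is stated there as an immediate consequence of the Fourier inversion formula \eqref{FIF} together with the finite propagation speed of $e^{isD}$, and your testing-against-$\varphi$ argument simply makes this explicit. The justification of commuting the pairing with the Bochner integral is indeed the only point requiring care, and you handle it correctly via continuity of the linear functional.
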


On the other hand the ellipticity of $D$ insures that the  operator defined by applying a Schwartz function $F$ to $D$ is necessarily a smoothing operator.

Let as before $F$ be an even Schwartz function in $\Sch(\RR)$ such that $F\equiv 1$ near the origin. Let $F_{\varepsilon}(x):=F(\varepsilon x)$. Our main result in this section is 
\begin{theo}\label{operator_embedding}
Given an admissible differential operator $D$ and a Schwartz function $F$ as above the family of operators $(F_{\varepsilon}(D))_{\eps\in I}$ provides an optimal regularization process in sense of Def.\ \ref{bundle_embedding}.
\end{theo}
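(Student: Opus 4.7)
The plan is to run the arguments of Section~\ref{embsec} in parallel, with $\sqrt{\Delta}$ replaced by $D$ and $\cos(s\sqrt{\Delta})$ replaced by the unitary group $e^{isD}$ (well-defined since $D$ is essentially self-adjoint). The three admissibility properties play the roles previously served by self-adjointness, finite propagation speed and ellipticity of $\Delta$: unitarity of $e^{isD}$ supplies the $L^2$-bounds via the Fourier representation (\ref{FIF}); finite propagation yields proper support and support preservation; and ellipticity of $D$ delivers both the smoothing property and the equivalence of the $D$-based Sobolev norms $\|(1+D^2)^{s/2}u\|_{L^2}$ with the usual ones on sections of $E$. As a preliminary step, I insert an even cutoff $\phi_c$ with $\supp(\phi_c)\subseteq[-2c,2c]$ and work with the modified family $\tilde T_\varepsilon := \frac{1}{2\pi}\int \phi_c(s)\widehat{F_\varepsilon}(s)e^{isD}\,ds$. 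By the support lemma preceding the theorem, $\tilde T_\varepsilon$ is properly supported, and by ellipticity of $D$ it is smoothing (the kernel argument of Prop.~\ref{Gsmooth}(i) goes through with $D$-based Sobolev spaces). The analogue of Lemma~\ref{norm_estimates} is immediate since $\|e^{isD}\|=1$. A verbatim copy of Lemma~\ref{philemma} shows the class of $\tilde T_\varepsilon w$ in $\maG(M:E)$ is independent of the cutoff, so $[(\tilde T_\varepsilon w)]=:\iota_T(w)$ is the natural candidate for the class associated with $F_\varepsilon(D)$.

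Properties (\ref{moderatevb}), (\ref{Identityvb}) and (\ref{negligiblevb}) of Definition~\ref{bundle_embedding}, together with support preservation, are then obtained by direct translations of Propositions~\ref{AProp}, \ref{Bprop}, \ref{DProp} and the support proposition preceding Prop.~\ref{DProp}. Moderate growth reduces to the estimate~(\ref{Aproplast}) after replacing $\Delta$-powers by $D$-powers; the approximate identity property follows from dominated convergence and $\|e^{isD}u-u\|_{L^2}\to 0$ as $s\to 0$; negligibility on smooth sections is obtained by Taylor-expanding $e^{i\varepsilon rD}u$ and using the vanishing of all higher moments $\int r^k\hat F(r)\,dr$, a consequence of $F\equiv 1$ near the origin; and support preservation is settled by splitting the defining integral at $|r|=2c/\sqrt{\varepsilon}$ and applying the propagation bound $\supp(e^{i\varepsilon rD}w)\subseteq B_{C_D\varepsilon|r|}(\supp(w))$.

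The main obstacle is wavefront-set preservation (\ref{singularityvb}). The argument parallels Theorem~\ref{wfth}, but the underlying Cauchy problem is now the first-order system $(\partial_t - iD)u=0$, $u(0,\cdot)=w$. Its principal symbol $i(\tau\,\mathrm{id}_E - \sigma_D(x,\xi))$ has real, smoothly varying eigenvalues on $T^*M\setminus 0$ (ellipticity of $D$ prevents the zero eigenvalue on the cosphere bundle), and propagation of singularities along the corresponding bicharacteristic branches is supplied by the Duistermaat--H\"ormander theorem for systems of real principal type. Continuity of these flows over the finite time window $[-2c,2c]$ imposed by $\phi_c$ yields, for any $(x_0,\xi_0)\notin\WF(w)$, a conic neighborhood $\Gamma_1$ of $\xi_0$ and a neighborhood $U$ of $x_0$ disjoint from the projection of the transported wavefront set --- the analogue of (\ref{wflemma}). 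The Fourier-inversion and cutoff argument at the end of Theorem~\ref{wfth} then produces rapid decay of $(\chi\cdot\iota_T(w)_\varepsilon)^\wedge$ on $\Gamma_1$, so $(x_0,\xi_0)\notin\WFg(\iota_T(w))$. The reverse inclusion is treated as in Theorem~\ref{wfth}: reduce to $w\in L^2(M:E)$ via $w=(1+D^2)^k u$ (ellipticity preserves $\WF$), establish $\|\phi_c(\varepsilon r)e^{i\varepsilon rD}w - w\|_{L^2}=O(\varepsilon|r|)$ on $\Dom(D)$ by functional calculus, and run the contradiction argument based on (\ref{phiwest}).
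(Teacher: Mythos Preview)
Your proposal is essentially correct, but the paper argues quite differently, and the comparison is instructive. Instead of redoing the estimates of Section~\ref{embsec} with $e^{isD}$ in place of $\cos(s\sqrt{\Delta})$, the paper uses finite propagation speed as a \emph{localization device}: given $u\in\maE'(M:E)$, it embeds a neighborhood of $\supp(u)$ isometrically into the double $DX$ of a compact manifold with boundary, extends $D$ to an operator $D_X$ on $DX$, and observes that by uniqueness for~\eqref{transport_eqn} one has $e^{isD}u=e^{isD_X}u$ for $|s|$ small, hence (after the cutoff $\phi$) the two functional calculi agree on $u$ modulo negligibles. All the growth and negligibility estimates of Definition~\ref{bundle_embedding} are then imported from the compact case via Lemma~\ref{weyl_law}, whose proof in~\cite{D} rests on Weyl's asymptotic~\eqref{weyl}. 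Wavefront preservation is likewise reduced to the local statement and Theorem~\ref{wfth}. The payoff is that no estimate needs to be reproved; the price is the doubling construction and the dependence on~\cite{D}.

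Your direct route is self-contained and stays on the original (noncompact) manifold throughout, which is cleaner in some respects. The one place where you should be more careful is the appeal to ``real, smoothly varying eigenvalues'' and ``Duistermaat--H\"ormander for systems of real principal type'': for a general admissible $D$ the eigenvalues of $\sigma_D(x,\xi)$ need not have constant multiplicity, so smoothness of the individual branches is not automatic. What you actually need, and what is available for first-order symmetric hyperbolic systems, is only the \emph{inclusion} $\WF(e^{isD}w)\subseteq$ flow-out of $\WF(w)$ along the bicharacteristics of the characteristic variety (e.g.\ via the FIO parametrix in~\cite{taylor}); continuity of this flow over $|s|\le 2c$ then gives the analogue of~\eqref{wflemma} without any multiplicity hypothesis. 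With that clarification your argument goes through.
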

The next two subsections provide the arguments for the proof.

\subsection{Weyl's law and functional calculus.}
In this subsection we shall assume that $M$ is compact.  In this case any symmetric operator $D$  is essentially self-adjoint. In addition the operator $D^2$ is a positive elliptic operator by assumption and hence Weyl's asymptotic formula for eigenvalues gives
\begin{eqnarray}\label{weyl}
N_{D^2}(\lambda):=\#\{\lambda_i\in \operatorname{sp}(D^2)|~~\lambda_i\leq \lambda\}\sim C
\lambda^{\frac{\operatorname{dim}(M)}{2}}.
\end{eqnarray}
 Then the following  can be obtained by applying \eqref{weyl}.
\begin{lem}\label{weyl_law}
Let $D$ be an elliptic self-adjoint differential operator of order $1$ and let $M$ be compact.  Then for a Schwartz function $F$ on $\R$ with $F\equiv 1$ near the origin we have:
\begin{enumerate}[(A)]
\item Given a smooth section $u\in \Gamma(M:E)$
\[\|F_{\varepsilon}(D)u-u\|_{L^2(M:E)} = O(\varepsilon^m)\quad \textrm {for all }~m\in\ZZ.\]

\item If $s\not\in H^k(M:E)$  for every $k>t$  then given any $\delta>0$,
$\|F_{\varepsilon}(D)s\|_{L^2(M:E)}$ is not $O(\varepsilon^{\frac{\operatorname{dim~M}}{2}+t+\delta})$.
In particular,
\[\iota_{F_{\varepsilon}(D)}(\D'(M:E))\cap \maG^{\infty}(M:E)= \Gamma^{\infty}(M:E)).\]

\item For every distributional section $s$ the regularization $(F_{\varepsilon}(D)s)$ is  moderate.
\end{enumerate}
\end{lem}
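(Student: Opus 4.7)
The plan is to reduce all three estimates to spectral sums. Since $M$ is compact and $D$ is elliptic self-adjoint of order one, $D$ admits a complete orthonormal eigenbasis $\{\phi_i\}_{i \in \N}$ of $L^2(M:E)$ with real eigenvalues $\mu_i$, whose squares $\mu_i^2$ obey Weyl's asymptotic \eqref{weyl}. Expanding $s = \sum_i c_i \phi_i$ diagonalizes the functional calculus as $F_\eps(D) s = \sum_i F(\eps \mu_i) c_i \phi_i$, and Sobolev regularity is captured spectrally by $\|s\|_{H^k}^2 \sim \sum_i (1+\mu_i^2)^k |c_i|^2$. After fixing $a > 0$ with $F \equiv 1$ on $[-a,a]$, all three claims become weighted $\ell^2$ inequalities exploiting the interplay between $F \equiv 1$ near zero and the Schwartz decay of $F$ at infinity.

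For (A), I would use that smoothness of $u$ forces $\sum (1+\mu_i^2)^k |c_i|^2 < \infty$ for every $k$, while $F(\eps\mu_i) - 1$ vanishes on the range $\{\mu_i \le a/\eps\}$. Trading regularity for $\eps$-decay via $(1+\mu_i^2)^{-k} \le (\eps/a)^{2k}$ on the complementary tail then yields $\|F_\eps(D) u - u\|^2_{L^2} \le C (\eps/a)^{2k} \|u\|^2_{H^k}$ for every $k$, i.e., rapid decay. For (C), any $s \in \D'(M:E)$ lies in some $H^{-K}$; factoring $(1+\mu_i^2)^k |F(\eps \mu_i)|^2 = [(1+\mu_i^2)^{k+K} |F(\eps\mu_i)|^2] \cdot (1+\mu_i^2)^{-K}$ and using the Schwartz decay of $F$ to bound the bracketed supremum by $C \eps^{-2(k+K)}$ produces $\|F_\eps(D) s\|_{H^k} = O(\eps^{-(k+K)})$, which after Sobolev embedding on $M$ is moderate in every $C^k$-seminorm.

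The heart of the lemma is (B), which I would prove by contradiction: assume $\|F_\eps(D) s\|^2_{L^2} \le C \eps^{n+2t+2\delta}$ for small $\eps$. Since $F(\eps\mu_i) = 1$ on $\mu_i \le a/\eps$, the spectral partial sum $S(R) := \sum_{\mu_i \le R} |c_i|^2$ is dominated by $\|F_{a/R}(D) s\|^2_{L^2}$, so $S(R) \le C' R^{-(n+2t+2\delta)}$ for large $R$. I would then decompose $\|s\|^2_{H^\tau}$ dyadically over the spectral shells $\{2^j < \mu_i \le 2^{j+1}\}$, bound each shell mass $\sigma_j := S(2^{j+1}) - S(2^j) \le S(2^{j+1})$, and collect the result into a geometric series $\sum_j 2^{j(2\tau - n - 2t - 2\delta)}$, convergent precisely for $\tau < n/2 + t + \delta$. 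This forces $s \in H^{t+\delta/2}$, contradicting $s \notin H^k$ for $k > t$. The concluding identity $\iota_{F_\eps(D)}(\D'(M:E)) \cap \maG^\infty(M:E) = \Gamma^\infty(M:E)$ would then follow by applying this lower bound to $s' := (1+D^2)^{k/2} s$: if $s$ had only finite regularity $t$ but $\iota(s) \in \maG^\infty$ with common moderation exponent $N$, then the uniform estimate $\|F_\eps(D) s\|_{H^k} = \|F_\eps(D) s'\|_{L^2} = O(\eps^{-N})$ would collide with the lower bound as soon as $k > N + n/2 + t$. The delicate step throughout is the dyadic argument in (B), whose bookkeeping has to be sharp enough to land at exactly the Weyl-calibrated threshold $n/2$.
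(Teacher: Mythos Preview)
Your argument is correct in all three parts. The paper does not prove this lemma in place but defers to \cite{D}, asserting that the estimates ``can be obtained by applying \eqref{weyl}''. It is therefore worth noting that your route is somewhat different and, in one respect, sharper than the stated lemma.

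Your proofs of (A) and (C) use only the Schwartz decay of $F$, the spectral characterisation of the Sobolev scale via the elliptic first-order operator $D$, and Sobolev embedding on the compact base; Weyl's eigenvalue asymptotics do not enter. For (B), your contradiction via the dyadic shell decomposition is likewise Weyl-free: the bound $S(R)\le C' R^{-(n+2t+2\delta)}$ comes directly from the assumed $O(\eps^{n/2+t+\delta})$ rate together with $F\equiv 1$ near $0$, and the shell sum $\sum_j 2^{j(2\tau-n-2t-2\delta)}$ then converges for every $\tau<n/2+t+\delta$. In fact the same computation, run with an arbitrary exponent $\beta$ in place of $n/2+t+\delta$, shows that $\|F_\eps(D)s\|_{L^2}$ is not $O(\eps^{t+\delta})$ for any $\delta>0$, which is strictly stronger than the lemma as stated. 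So your closing remark that the bookkeeping must ``land at exactly the Weyl-calibrated threshold $n/2$'' overstates the delicacy: in your argument the $n/2$ is simply carried through from the hypothesis, not generated by eigenvalue counting.

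Where Weyl's law does become essential, and presumably does in \cite{D}, is in approaches that estimate the Schwartz kernel of $F_\eps(D)$ directly through the eigenfunction expansion $\sum_i F(\eps\mu_i)\,\phi_i\otimes\overline{\phi_i}$; controlling such sums in $C^k$-norms requires knowing how many eigenvalues lie in a window, which is precisely \eqref{weyl}. Your Sobolev-norm factorisation bypasses this entirely.
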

The proof of the  above  Lemma can be found in \cite{D}. This result is  precisely due to the fact that $D^2$ is a positive elliptic operator. We still need to prove that  the microlocal properties hold true for our regularizations $F_{\varepsilon}(D)$. These turn out  to be precisely due to the finite speed of propagation of $D$.

\subsection{Finite speed of propagation and localization}
We now return to the general situation where $M$ is a complete Riemannian manifold not necessarily compact.

Recall that if $X$ is a compact manifold with boundary then one can obtain a double of $X$, 
denoted here by $DX$ by  gluing two copies of $X$  along the boundary $\partial X$ (e.g., \cite{Ko}, VI 5.1).   
Now if $X$ is a compact manifold with boundary embedded in a Riemannian manifold $M$ of the same dimension and 
if $U$ is an open subset of $M$ such that $\bar{U}\subset \operatorname{interior}(X)$, then one can choose a 
Riemannian metric on $DX$ so that the inclusion $j:U\hookrightarrow  DX$ is an isometry.  Furthermore it is 
clear that given any vector bundle $E\rightarrow M$ there exists a vector bundle $E_X\rightarrow DX$ such that $E_X$ 
restricted to $U$ is canonically isomorphic to $E|_U$. At the same time there exists a   symmetric  elliptic operator 
$D_X$ on $E_X$ that matches up with $D$ on $U$.

We fix a compactly supported distributional section $u\in\mathcal{E}'(M:E)$ and a constant $c>0$.  Since $M$ is complete 
the open ball $U:=B_{2c\cdot C_D}(\operatorname{supp}(u))$ is relatively compact and is  contained in a compact manifold  
with boundary $X\subseteq M$. Now $u$ can be identified with a distributional section of a vector bundle $E_X\rightarrow  DX$.

\begin{prop} With assumptions  on $u,c$ and $F$ as above, let  us further assume that the Fourier transform  $\hat{F}(s)$ is supported in an interval $(-c,c)$. Then $F(D)u$ and $F(D_X)u$ are both supported in $U$ and 
\[F(D)u=F(D_X)u.\]
\end{prop}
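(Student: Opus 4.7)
The plan is to represent both $F(D)u$ and $F(D_X)u$ via the Fourier inversion formula \eqref{FIF}, reduce the equality to the claim that $e^{isD}u$ and $e^{isD_X}u$ agree (after extension by zero through the isometric inclusion $j\colon U\hookrightarrow DX$) for all $|s|<c$, and establish this by finite propagation speed together with uniqueness for the transport equation \eqref{transport_eqn}.

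First, I would handle the support claim. By the preceding lemma applied to $D$ on $M$ (respectively $D_X$ on $DX$), finite propagation speed yields
\[
\supp\bigl(e^{isD}u\bigr)\subseteq B_{C_D|s|}(\supp u),\qquad \supp\bigl(e^{isD_X}u\bigr)\subseteq B_{C_D|s|}(\supp u)
\]
for every $s$ with $|s|<c$, where on the $DX$ side one uses that distances in a small enough neighborhood of $\supp u$ inside $DX$ agree with those in $M$ because $j$ is an isometry on $U=B_{2cC_D}(\supp u)$. In particular both supports lie in the set $\overline{B_{cC_D}(\supp u)}\subset U$. Since $\hat F$ is supported in $(-c,c)$, the representation \eqref{FIF} together with continuity of $s\mapsto e^{isD}u$ in a suitable Sobolev space immediately gives $\supp(F(D)u)\subseteq U$ and $\supp(F(D_X)u)\subseteq U$.

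Next I would identify the two propagators on $U$. Extending $e^{isD}u$ by zero outside $U$ yields a well-defined section of $E_X$ on $DX$ (since its support is compactly contained in $U$), and analogously for $e^{isD_X}u$. Define $v(s):=e^{isD}u-e^{isD_X}u$ as a section of $E_X$, compactly supported in $U$ for $|s|<c$. Because $D_X$ coincides with $D$ on $U$ and the supports stay inside $U$, differentiating in $s$ gives
\[
\frac{\partial}{\partial s}v(s)=iD_X v(s),\qquad v(0)=0.
\]
Uniqueness of solutions to \eqref{transport_eqn} on the complete manifold $DX$ (guaranteed by essential self-adjointness of $D_X$) forces $v(s)=0$ for all $|s|<c$. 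Substituting the common value into \eqref{FIF} yields $F(D)u=F(D_X)u$.

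The main technical obstacle is making sense of $e^{isD}u$ and its derivative in $s$ for a distributional $u\in\mathcal{E}'(M:E)$, since the preceding discussion naturally lives on $L^2$. I would handle this either by noting that $u\in H^{-k}_{\mathrm{cp}}$ for some $k$, so that $e^{isD}u$ is a strongly continuous family in $H^{-k}$ and the transport equation holds in that space, or by a density argument: approximate $u$ by smooth sections $u_n\in\Gamma^\infty_c(M:E)$ with supports in a fixed compact neighborhood of $\supp u$, apply the argument above to each $u_n$ to obtain $F(D)u_n=F(D_X)u_n$ with uniformly controlled supports, and pass to the limit using continuity of $F(D)$ and $F(D_X)$ on the relevant space. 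Either route then yields the stated equality and support statement.
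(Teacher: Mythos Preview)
Your proposal is correct and follows essentially the same route as the paper: both arguments use finite propagation speed to confine the supports of $e^{isD}u$ and $e^{isD_X}u$ to $U$ for $|s|<c$, invoke uniqueness for the transport equation \eqref{transport_eqn} (given that $D$ and $D_X$ coincide on $U$) to conclude the two propagators agree there, and then apply the Fourier inversion formula \eqref{FIF}. Your treatment is simply more explicit about the technical details (handling distributional $u$, the extension-by-zero identification), which the paper's brief proof suppresses.
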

\begin{proof}
 Since the operators $D$  and $D_X$  restricted to the open set $U$ coincide, the uniqueness of solutions to the equation \eqref{transport_eqn}  implies that $e^{isD}u$ and $e^{isD_X}u$   agree for $s\le c$.  The statement therefore follows from the Fourier Inversion Formula  \eqref{FIF}.
\end{proof}
 We can now finish the proof of our main result.
 \begin{proof}[Proof of Theorem \ref{operator_embedding}]
 First we note that given any  cutoff function $\phi(s)$ supported in an interval $(-c,c)$ such that $\phi\equiv 1 $ near the origin, and any compactly supported  distributional section  $u$,
  \begin{align*}
     [F_{\varepsilon}(D)u]&=[\frac{1}{2\pi}\int^{\infty}_{-\infty}\phi(s)\hat{F}_\eps(s)e^{isD}u\,ds]\\
     &=[j^*\left(\frac{1}{2\pi}\int^{\infty}_{-\infty}\phi(s)\hat{F}_\eps(s)e^{isD_X}u\,ds\right)]
   \end{align*}
 in $\maG(M:E)$. 
 
 With this observation it is clear that:
 \begin{enumerate}
  \item  All estimates  for Definition \ref{bundle_embedding} follow from Lemma \ref{weyl_law}.
  \item  The support of $u$ coincides with the generalized support of $[F_{\ep}(D)u]$.  This implies that the embedding extends  to a sheaf morphism $i_{F_{\varepsilon}}:\D'(M:E)\rightarrow  \maG(M:E)$.
  \item Since wave-front sets are defined locally, our  embedding $\iota_{F_{\varepsilon}}$  preserves wavefront sets by Th.\ \ref{wfth}.
 \end{enumerate}
 \end{proof}

\begin{remark}
From the proof one notices that a second order positive elliptic differential operator $\Delta$ on sections of $E$ also  provides us with an  optimal embedding $F_{\ep}(\Delta)$  provided that the solution operator to the wave equation  \eqref{wave_eqn}, namely $\cos(s\sqrt{\Delta})$ propagates  support  at a finite speed.  Thus in particular if $T^r_s(M)$ denotes the tensor bundle on $M$ and $g$ a complete Riemannian metric on $M$ the induced Laplace operator $\Delta^r_s$ provides an example of such an operator.
\end{remark}

\subsection{Isomorphisms between vector bundles}
Let $\phi:E_1\rightarrow E_2$ be an isomorphism of Hermitian vector bundles (preserving the inner product). Given any admissible  differential operator $D_1$  on sections of $E_1$  the  push-forward $D_2:=\phi D\phi^{-1}$  is  also an iso-spectral admissible differential operator. In particular, for any Schwartz function $F$ we have
\[F(D_2)=\phi F(D_1)\phi^{-1}.\]
 The extension of $\phi_*:\Gamma^{\infty}(M:E_1)\rightarrow \Gamma^{\infty}(M:E_2)$  to the generalized sections, $\phi_*:\maG(M:E_1)\rightarrow \maG(M:E_2)$ commutes with the geometrical  embeddings $F_{\varepsilon}(D_1)$ and $F_{\varepsilon}(D_2)$. 
 
Thus for example if $r_1+s_1=r_2+s_2$ then the Riemannian metric provides an isomorphism $g:T^{r_1}_{s_1}(M)\rightarrow T^{r_2}_{s_2}(M)$  that pushes $\Delta^{r_1}_{s_1}$ to  $\Delta^{r_2}_{s_2}$. Hence the corresponding functional calculus embedding commutes with the lowering or raising of indices.

\subsection*{Acknowledgements} We would like to thank the referee for her/his remarks that have
led to a number of improvements in the paper.


\end{document}